\def\hat{\widehat}
\renewcommand\bar{\overline}
\def\RR{{\mathbb R}}
\def\CC{{\mathbb C}}
\def\hat{\widehat}
\def\GL{\mathop{\rm GL}\nolimits}
\def\SO{\mathop{\rm SO}\nolimits}
\def\Aut{\mathop{\rm Aut}\nolimits}
\def\Im{\mathop{\rm Im}\nolimits}
\def\ad{\mathop{\rm ad}\nolimits}
\def\Aff{\mathop{\rm Aff}\nolimits}
\def\tr{\mathop{\rm tr}\nolimits}
\def\qed{{\hfill $\Box$}}
\newtheorem{theorem}{THEOREM}[section]
\theoremstyle{definition}
\newtheorem{lemma}[theorem]{Lemma}
\theoremstyle{remark}
\newtheorem{remark}[theorem]{Remark}
\def\blfootnote{\xdef\@thefnmark{}\@footnotetext}
\begin{document}

\title[Manifolds with automorphism group of subcritical dimension]{Homogeneous Kobayashi-hyperbolic manifolds\\ with automorphism group of\\ subcritical dimension}\blfootnote{{\bf Mathematics Subject Classification:} 32Q45, 32M05, 32M10.}\blfootnote{{\bf Keywords:} Kobayashi-hyperbolic manifolds, homogeneous complex manifolds, the group of holomorphic automorphisms.}
\author[Isaev]{Alexander Isaev}

\address{Mathematical Sciences Institute\\
Australian National University\\
Canberra, Acton, ACT 2601, Australia}
\email{alexander.isaev@anu.edu.au}

\maketitle

\thispagestyle{empty}

\pagestyle{myheadings}

\begin{abstract}We determine all connected homogeneous Kobayashi-hyperbolic manifolds of dimension $n\ge 2$ whose holomorphic automorphism group has dimension $n^2-3$. This result complements existing classifications for automorphism group dimension $n^2-2$ (which is in some sense critical) and greater.
\end{abstract}

\section{Introduction}\label{intro}
\setcounter{equation}{0}

Recall that a connected complex manifold $M$ is said to be \emph{Kobayashi-hyperbolic} if the Kobayashi pseudodistance $K_M$ on $M$ is in fact a distance, i.e., for $p,q\in M$ the identity $K_M(p,q)=0$ implies $p=q$. For instance, any bounded domain in a finite-dimensional complex vector space is Kobayashi-hyperbolic. Such manifolds are of interest in complex analysis and geometry as they enjoy a variety of nice properties (see \cite{K1}, \cite{K2} for details). In particular, if $M$ is Kobayashi-hyperbolic, the group $\Aut(M)$ of its holomorphic automorphisms is a (real) Lie group in the compact-open topology (see \cite[Chapter V, Theorem 2.1]{K1}). This follows, for example, from the fact that the action of $\Aut(M)$ on $M$ is proper, which implies that $\Aut(M)$ is locally compact hence a Lie transformation group (see the survey paper \cite{I5} for details). 

Let $n:=\dim_{\CC}M$ and assume that $n\ge 2$. Set $d(M):=\dim\Aut(M)$. It is a classical result that $d(M)\le n^2+2n$ with equality attained if and only if $M$ is biholomorphic to the unit ball $B^n\subset\CC^n$ (see \cite[Chapter V, Theorem 2.6]{K1}). In \cite{I1}, \cite{I2}, \cite{I4}, \cite{IK} we determined all Kobayashi-hyperbolic manifolds satisfying $n^2-1\le d(M)< n^2+2n$. Our classification has turned out to be quite useful in applications (see, e.g., \cite{V}), and one would like to extend it to automorphism group dimensions less than $n^2-1$. However, the value $n^2-2$ is critical in the sense that one cannot hope to obtain a full explicit description of Kobayashi-hyperbolic manifolds for $d(M)=n^2-2$ and all $n\ge 2$. Indeed, a generic Reinhardt domain in $\CC^2$ has a 2-dimensional automorphism group, so no reasonable classification exists for $n=2$ (see \cite[pp.~6--7]{I3} for a precise argument). Furthermore, an explicit classification for $n\ge 3$ also appears to be out of reach since even the easier case $d(M)=n^2-1$ is already rather complicated (see \cite{I1}, \cite{I4}). 

Nevertheless, some hope remains in the situation when $M$ is \emph{homogeneous}, i.e., when the action of $\Aut(M)$ on $M$ is transitive. Homogeneous manifolds are of general interest in geometry, and in \cite[Theorem 1.1]{I6} we classified all homogeneous Kobayashi-hyperbolic manifolds with automorphism group of critical dimension $n^2-2$. It is then natural to ask by how much further one can decrease $d(M)$ while still being able to produce reasonable explicit descriptions of the corresponding manifolds $M$. 

This paper is a follow-up to article \cite{I6}. Here we address the above question by making a step down from $n^2-2$ to subcritical dimension $n^2-3$. Specifically, we obtain:

\begin{theorem}\label{main}
Let $M$ be a homogeneous Kobayashi-hyperbolic manifold with $d(M)=n^2-3$. Then $n=4$ and $M$ is biholomorphic to $B^1\times T_3$, where
\begin{equation}
T_3:=\left\{(z_1,z_2,z_3)\in\CC^3:(\Im z_1)^2-(\Im z_2)^2-(\Im z_3)^2>0,\,\,\Im z_1>0\right\}\label{domaint3}
\end{equation}
is the symmetric bounded domain of type {\rm (}$\hbox{{\rm IV}}_3${\rm )} {\rm (}written in tube form{\rm )}.
\end{theorem}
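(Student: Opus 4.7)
The plan is to reduce $M$ to a homogeneous bounded domain and then constrain its compact isotropy by dimension. By a theorem of Nakajima, every connected homogeneous Kobayashi-hyperbolic manifold is biholomorphic to a homogeneous bounded domain $D\subset\CC^n$, so I assume $M=D$. Let $G:=\Aut(D)^0$ act transitively on $D$, fix $p\in D$, and write $K:=G_p$, so that $\dim G=2n+\dim K$. Since $D$ is bounded, $K$ is compact and, by H. Cartan's linearisation theorem, embeds in $U(n)$ via its action on $T_pD\cong\CC^n$. The hypothesis $d(M)=n^2-3$ then yields
\[
\dim K=n^2-2n-3.
\]

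The cases $n\leq 3$ drop out immediately. For $n=2$, the formula gives $\dim K<0$. For $n=3$, it gives $\dim K=0$; but the complete list of homogeneous bounded domains of complex dimension $3$ (all of them symmetric: $B^3$, $B^1\times B^2$, $\mathrm{IV}_3$, and $(B^1)^3$) has automorphism groups of dimensions $15,11,10,9$, none equal to $6=3^2-3$. Hence $n\geq 4$.

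For $n=4$, $\dim K=5$ and $\rank K\leq\rank U(4)=4$, forcing $\mathfrak{k}\cong\mathfrak{su}(2)\oplus\RR^2$ (the abelian Lie algebra $\RR^5$ has rank $5$ and does not embed in $\mathfrak{u}(4)$). Decomposing $\CC^4$ as a module over the $\mathfrak{su}(2)$-summand, and writing $V_k$ for the $(k+1)$-dimensional irreducible representation, the possibilities are $V_3$, $V_2\oplus V_0$, $V_1\oplus V_1$, and $V_1\oplus V_0^{\oplus 2}$. The centraliser of $V_3$ in $\mathfrak{u}(4)$ is only $\mathfrak{u}(1)$, which cannot accommodate $\mathfrak{u}(1)^2$; the decompositions $V_1\oplus V_1$ and $V_1\oplus V_0^{\oplus 2}$ force a splitting of $D$ whose factors, of dimensions summing to $4$, cannot match the finite list of homogeneous bounded domains of dimension $\leq 3$ recalled above. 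The surviving case $V_2\oplus V_0$ is precisely the isotropy representation of $B^1\times T_3$, with $K=U(1)\times\SO(2)\times\SO(3)$ acting by scalars on the $B^1$-factor and by the standard $\SO(2)\times\SO(3)$-isotropy on $T_3=\mathrm{IV}_3$; and indeed $d(B^1\times T_3)=3+\dim\SO^0(2,3)=3+10=13=4^2-3$.

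For $n\geq 5$, I write $D=D_1\times\cdots\times D_r$ as a product of irreducible homogeneous bounded factors. The sharp bound $d(D_i)\leq(\dim D_i)^2+2\dim D_i$ with equality iff $D_i$ is a ball recasts $\sum d(D_i)=n^2-3$ as
\[
2\sum_{i<j}(\dim D_i)(\dim D_j)+\sum_i\epsilon_i=2n+3,
\]
where $\epsilon_i:=(\dim D_i)^2+2\dim D_i-d(D_i)\geq 0$. A finite case check over partitions of $n\geq 5$, using the known values of $\epsilon_i$ for each irreducible Hermitian symmetric type and the a priori lower bounds on $\epsilon_i$ for non-symmetric irreducible factors (from the Kaneyuki--Dorfmeister classification), shows that this equation has no solution. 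I expect the main obstacle to lie precisely in this enumeration, together with the $n=4$ verification that the $\mathfrak{su}(2)$-decompositions $V_1\oplus V_1$ and $V_1\oplus V_0^{\oplus 2}$ cannot be realised as the isotropy of any homogeneous bounded domain of complex dimension $4$; this last point may require direct appeal to the normal $j$-algebra structure of $D$.
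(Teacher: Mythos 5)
Your reduction to a bounded realization and the bookkeeping $\dim K=n^2-2n-3$ are fine, and the route (isotropy representation of a bounded domain rather than the graded Lie algebra of a Siegel domain of the second kind, which is what the paper uses) is genuinely different. But the proposal has two gaps that sit exactly where the real work is. The more serious one is the case $n\ge 5$. Your identity $2\sum_{i<j}(\dim D_i)(\dim D_j)+\sum_i\epsilon_i=2n+3$ is correct, but the claim that it is settled by ``a finite case check \dots using the known values of $\epsilon_i$ for each irreducible Hermitian symmetric type and the a priori lower bounds on $\epsilon_i$ for non-symmetric irreducible factors'' assumes a uniform lower bound on the automorphism-group deficiency of an arbitrary irreducible non-symmetric homogeneous bounded domain of dimension $m\ge 4$. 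No such bound comes for free from the Kaneyuki--Tsuji or Dorfmeister work: there is no finite list of irreducible homogeneous bounded domains in general dimension (continuous families already occur in dimension $7$), and in the single-factor case $r=1$ your equation reads $\epsilon_1=2n+3$, i.e.\ you must rule out an irreducible non-symmetric domain with $d(D)=n^2-3$ for every $n\ge 5$. Producing that bound is precisely the content of the paper's estimate $d(S(\Omega,H))\le\frac{3k^2}{2}-(2n+\frac52)k+n^2+4n+1$ together with the case-by-case analysis for small $k$; your proposal replaces it with an assertion.

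The second gap is in the $n=4$ analysis. The identification $\mathfrak{k}\cong\mathfrak{su}(2)\oplus\RR^2$ is sound, but (i) a reducible isotropy representation does not by itself force $D$ to split as a product of lower-dimensional homogeneous domains (this is a holonomy-type statement, not an isotropy statement, and you concede it may ``require direct appeal to the normal $j$-algebra structure''), so the cases $V_1\oplus V_1$ and $V_1\oplus V_0^{\oplus 2}$ are not eliminated; and (ii) in the surviving case $V_2\oplus V_0$ you only verify that $B^1\times T_3$ \emph{realizes} this isotropy, not that any $4$-dimensional homogeneous bounded domain with $\dim K=5$ and this isotropy type \emph{must be} $B^1\times T_3$ --- in particular the non-symmetric homogeneous domains of dimension $4$ are never addressed. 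Both points can presumably be repaired (e.g.\ by invoking the full Kaneyuki--Tsuji classification in dimension $4$, which makes the representation-theoretic detour unnecessary), but as written the existence half of the conclusion is not proved. For comparison, the paper avoids both issues by passing to the affinely homogeneous Siegel-domain realization, where the rank $k$ of the cone and the explicit description of $\mathfrak g_{-1}\oplus\mathfrak g_{-1/2}\oplus\mathfrak g_0\oplus\mathfrak g_{1/2}\oplus\mathfrak g_1$ give computable dimension bounds in every case.
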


\noindent In particular, it turns out that the product $B^1\times T_3$ is completely characterized by its automorphism group dimension among all homogeneous Kobayashi-hyperbolic manifolds. A similar effect is known for the ball $B^n$, the product $B^{n-1}\times B^1$, and the domain $T_3$ (see \cite[Theorem 2.2]{I3}). 

We note that making a further step down, i.e., producing an explicit classification for the case $d(M)=n^2-4$, appears to be much harder if not impossible, so it seems that Theorem \ref{main} edges quite closely to what one may hope to achieve in principle. Combining this theorem with the classical fact for $d(M)=n^2+2n$ mentioned earlier, \cite[Theorem 2.2]{I3} and \cite[Theorem 1.1]{I6}, we obtain:

\begin{theorem}\label{combined}
Let $M$ be a homogeneous Kobayashi-hyperbolic manifold satisfying $n^2-3\le d(M)\le n^2+2n$. Then $M$ is biholomorphic either to a suitable product of balls, or to a suitable symmetric bounded domain of type {\rm (IV)}, or to a suitable product of a ball and a symmetric bounded domain of type {\rm (IV)}. Specifically, the following products of balls are possible:
\begin{itemize}

\item[{\rm (i)}] $B^n$ {\rm (}here $d(M)=n^2+2n${\rm )},
\vspace{0.1cm}

\item[{\rm (ii)}] $B^{n-1}\times B^1$ {\rm (}here $d(M)=n^2+2${\rm )},
\vspace{0.1cm}

\item[{\rm (iii)}] $B^1\times B^1\times B^1$ {\rm (}here $n=3$, $d(M)=9=n^2${\rm )},
\vspace{0.1cm}

\item[{\rm (iv)}] $B^2\times B^2$ {\rm (}here $n=4$, $d(M)=16=n^2${\rm )},
\vspace{0.1cm}

\item[{\rm (v)}] $B^2\times B^1\times B^1$ {\rm (}here $n=4$, $d(M)=14=n^2-2${\rm )},
\vspace{0.1cm}

\item[{\rm (vi)}] $B^3\times B^2$ {\rm (}here $n=5$, $d(M)=23=n^2-2${\rm )},  
\end{itemize}

\noindent the following symmetric bounded domains of type {\rm (IV)} {\rm (}written in tube form{\rm )} are possible:

\begin{itemize}

\item[{\rm (vii)}] the domain of type {\rm (}$\hbox{{\rm IV}}_3${\rm )}, i.e., the domain $T_3$ defined in {\rm (\ref{domaint3})} {\rm (}here $n=3$, $d(M)=10=n^2+1${\rm )},
\vspace{0.3cm}

\item[{\rm (viii)}] the domain of type {\rm (}$\hbox{{\rm IV}}_4${\rm )}
\begin{equation}
\begin{array}{l}
T_4:=\left\{(z_1,z_2,z_3,z_4)\in\CC^4:(\Im z_1)^2-(\Im z_2)^2-\right.\\
\vspace{-0.3cm}\\
\hspace{5.5cm}\left.(\Im z_3)^2-(\Im z_4)^2>0,\,\,\Im z_1>0\right\}
\end{array}\label{domaint4}
\end{equation} 
{\rm (}here $n=4$, $d(M)=15=n^2-1${\rm )},

\end{itemize}

\noindent and the following product of a ball and a symmetric bounded domain of type {\rm (IV)}   is possible:

\begin{itemize}

\item[{\rm (ix)}] $B^1\times T_3$ {\rm (}here $n=4$, $d(M)=13=n^2-3${\rm )}.

\end{itemize}

\end{theorem}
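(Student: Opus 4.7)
The plan is to prove Theorem \ref{combined} essentially by compilation, partitioning the dimension interval $[n^2-3,\,n^2+2n]$ into the four pieces $\{n^2+2n\}$, $[n^2-1,\,n^2+2n-1]$, $\{n^2-2\}$, $\{n^2-3\}$, and invoking a known classification on each piece.

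For the top value $d(M)=n^2+2n$ the classical sharp dimension bound \cite[Chapter V, Theorem 2.6]{K1} recalled in the Introduction immediately forces $M$ to be biholomorphic to $B^n$, yielding item (i). On the second piece, \cite[Theorem 2.2]{I3} lists every (not necessarily homogeneous) Kobayashi-hyperbolic manifold with $n^2-1\le d(M)<n^2+2n$; restricting that list to its homogeneous entries produces exactly items (ii) (at $d=n^2+2$), (vii) (at $n=3$, $d=10$) and (viii) (at $n=4$, $d=15$), and simultaneously rules out every other value of $d(M)$ in this range for homogeneous $M$. The critical value $\{n^2-2\}$ is then handled by \cite[Theorem 1.1]{I6}, whose four outputs are precisely items (iii)--(vi). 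Finally, the subcritical value $\{n^2-3\}$ is the content of Theorem \ref{main} of the present paper, giving item (ix).

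It remains to verify the parenthetical dimensions in (i)--(ix), which is a routine computation using the standard formulas $\dim\Aut(B^k)=k^2+2k$ and $\dim\Aut(T_k)=(k+2)(k+1)/2$ (the latter because, up to a finite cover, $\Aut(T_k)$ is $\SO_0(2,k)$), together with the additivity of $\dim\Aut$ over direct products and the elementary facts that products of Kobayashi-hyperbolic (respectively homogeneous) complex manifolds are again Kobayashi-hyperbolic (respectively homogeneous). A brief case-by-case match then places each candidate manifold into its correct slot in (i)--(ix) and confirms the listed dimensions $n^2-3,\,n^2-2,\,n^2-1,\,n^2,\,n^2+1,\,n^2+2,\,n^2+2n$.

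There is no genuine internal obstacle in the proof of Theorem \ref{combined}: its entire technical content is distributed across the four classifications being invoked. The only new increment is Theorem \ref{main}, so the real difficulty in the overall project is absorbed there, while Theorem \ref{combined} functions as a concise synthesis presenting the final classification across the whole range $n^2-3\le d(M)\le n^2+2n$.
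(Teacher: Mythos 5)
Your proposal is correct and is essentially identical to the paper's own derivation: the paper obtains Theorem \ref{combined} precisely by combining the classical result for $d(M)=n^2+2n$, the classification in \cite[Theorem 2.2]{I3} for the range $n^2-1\le d(M)<n^2+2n$ restricted to homogeneous manifolds, \cite[Theorem 1.1]{I6} for $d(M)=n^2-2$, and Theorem \ref{main} for $d(M)=n^2-3$. The only difference is that you make explicit the routine dimension checks (via $\dim\Aut(B^k)=k^2+2k$, $\dim\Aut(T_k)=(k+1)(k+2)/2$, and additivity over products of hyperbolic manifolds) that the paper leaves implicit.
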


The proof of Theorem \ref{main} is given in Section \ref{proof} and, just as the proof of the main theorem of \cite{I6}, is based on reduction to the case of the so-called \emph{Siegel domains of the second kind} introduced by I. Pyatetskii-Shapiro at the end of the 1950s (see Section \ref{prelim} for the definition). Indeed, in the important paper \cite{VGP-S} it was shown that every homogeneous bounded domain in $\CC^n$ is biholomorphic to an affinely homogeneous Siegel domain of the second kind. Moreover, in \cite{N} this result was extended to arbitrary homogeneous Kobayashi-hyperbolic manifolds, which solved a problem posed in \cite[p.~127]{K1}. Theorem \ref{main} is then deduced from the description of the Lie algebra of the automorphism group of a Siegel domain of the second kind given in \cite{KMO}, \cite[Chapter V, \S 1--2]{S}.

{\bf Acknowledgement.} This work is supported by the Australian Research\linebreak Council.

\section{Siegel Domains of the Second Kind}\label{prelim}
\setcounter{equation}{0}

Here we define Siegel domains of the second kind and collect their properties as required for our proof of Theorem \ref{main} in the next section. What follows is an abridged version of the exposition given in \cite[Section 2]{I6}.

To start with, an open subset $\Omega\subset\RR^k$ is called an \emph{open convex cone} if it is closed with respect to taking linear combinations of its elements with positive coefficients. Such a cone $\Omega$ is called \emph{{\rm (}linearly{\rm )} homogeneous} if the group
$$
G(\Omega):=\{A\in\GL_k(\RR):A\Omega=\Omega\}
$$
of linear automorphisms of $\Omega$ acts transitively on it. Clearly, $G(\Omega)$ is a closed subgroup of $\GL_k(\RR)$, and we denote by ${\mathfrak g}(\Omega)\subset{\mathfrak{gl}}_k(\RR)$ its Lie algebra.

We will be interested in open convex cones not containing entire lines. For such cones the dimension of ${\mathfrak g}(\Omega)$ admits a useful estimate.

\begin{lemma}\label{ourlemma}\cite[Lemma 2.1]{I6}\it Let $\Omega\subset\RR^k$ be an open convex cone not containing a line. Then
\begin{equation}
\dim {\frak g}(\Omega)\le\displaystyle\frac{k^2}{2}-\frac{k}{2}+1.\label{conineq}
\end{equation}
\end{lemma}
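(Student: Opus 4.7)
The plan is to apply the orbit--stabilizer principle at a single point of $\Omega$, combined with a bound on the stabilizer dimension coming from compactness. Fix $x_{0}\in\Omega$, let $H:=\{g\in G(\Omega):gx_{0}=x_{0}\}$, and let $\mathfrak h$ be its Lie algebra. Since the orbit $G(\Omega)\cdot x_{0}$ is contained in $\RR^{k}$, the orbit--stabilizer identity yields
\[
\dim\mathfrak{g}(\Omega)\;=\;\dim\mathfrak{h}+\dim\bigl(G(\Omega)\cdot x_{0}\bigr)\;\le\;\dim\mathfrak{h}+k,
\]
so it is enough to prove $\dim\mathfrak{h}\le (k-1)(k-2)/2$.

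The first step is to show that $H$ is compact, and this is precisely the place where the assumption that $\Omega$ contains no line is essential. A proper open convex cone carries a Koszul--Vinberg characteristic function $\phi_{\Omega}\co\Omega\to\RR_{>0}$, which transforms under $g\in G(\Omega)$ by $\phi_{\Omega}(gx)=|\det g|^{-1}\phi_{\Omega}(x)$; hence the Hessian of $-\log\phi_{\Omega}$ is a positive-definite $G(\Omega)$-invariant Riemannian metric on $\Omega$. Thus $G(\Omega)$ acts on $\Omega$ by isometries of a Riemannian manifold, in particular properly, so every point-stabilizer is a compact subgroup of $\GL_{k}(\RR)$.

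With $H$ compact, choose an $H$-invariant positive-definite inner product on $\RR^{k}$. Then $H$ acts orthogonally and fixes $x_{0}$, so it preserves the orthogonal hyperplane $x_{0}^{\perp}$ of dimension $k-1$ and embeds faithfully into $\O(x_{0}^{\perp})\cong\O(k-1)$. This gives
\[
\dim\mathfrak{h}\;\le\;\dim\O(k-1)\;=\;\frac{(k-1)(k-2)}{2},
\]
and substituting back produces
\[
\dim\mathfrak{g}(\Omega)\;\le\;k+\frac{(k-1)(k-2)}{2}\;=\;\frac{k^{2}}{2}-\frac{k}{2}+1,
\]
which is precisely \eqref{conineq}. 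The only nontrivial point in the outline is the compactness of $H$; everything else is elementary linear algebra and the orbit--stabilizer formula. One could equally well replace the characteristic function argument by a direct verification using the boundedness of the affine slice $\Omega\cap\{\ell=1\}$ for a suitable $\ell$ in the dual cone $\Omega^{*}$, but the Koszul--Vinberg construction packages this in the cleanest form.
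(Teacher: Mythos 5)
Your proof is correct and follows essentially the same route as the proof of \cite[Lemma 2.1]{I6} that the paper cites for this statement: bound the orbit dimension by $k$, and bound the isotropy group at $x_0$ --- which is compact precisely because $\Omega$ contains no line --- by $\dim\O(k-1)=(k-1)(k-2)/2$ via its faithful orthogonal action on $x_0^{\perp}$. The only quibble is a sign: $\log\phi_{\Omega}$ is convex on a proper cone, so the invariant Riemannian metric is the Hessian of $+\log\phi_{\Omega}$, not of $-\log\phi_{\Omega}$.
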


We note that estimate (\ref{conineq}) is sharp as the right-hand side is the dimension of the group of linear automorphisms of the cone
$\{x\in\RR^k:x_1^2-x_2^2-\cdots-x_k^2>0,\,\,x_1>0\}$.

Next, let
$$
H:\CC^m\times\CC^m\to\CC^k
$$
be a Hermitian form on $\CC^m$ with values in $\CC^k$, where we assume that $H(w,w')$ is linear in $w'$ and anti-linear in $w$. For an open convex cone $\Omega\subset\RR^k$, the form $H$ is called \emph{$\Omega$-Hermitian} if $H(w,w)\in\overline{\Omega}\setminus\{0\}$ for all non-zero $w\in\CC^m$. Observe that if $\Omega$ contains no lines and $H$ is $\Omega$-Hermitian, then there exists a positive-definite linear combination of the components of $H$.

Now, a Siegel domain of the second kind in $\CC^n$ is an unbounded domain of the form
$$
S(\Omega,H):=\left\{(z,w)\in\CC^k\times\CC^{n-k}:\Im z-H(w,w)\in \Omega\right\}
$$
for some $1\le k\le n$, some open convex cone $\Omega\subset\RR^k$ not containing a line, and some $\Omega$-Hermitian form $H$ on $\CC^{n-k}$. For $k=n$ we have $H=0$, so in this case $S(\Omega,H)$ is the tube domain       
$$
\left\{z\in\CC^n:\Im z\in \Omega\right\}.
$$
Such tube domains are often called \emph{Siegel domains of the first kind}. At the other extreme, when $k=1$, the domain $S(\Omega,H)$ is linearly equivalent to
$$
\left\{(z,w)\in\CC\times\CC^{n-1}:\Im z-||w||^2>0\right\},
$$ 
which is an unbounded realization of the unit ball $B^n$ (see \cite[p.~31]{R}). In fact, any Siegel domain of the second kind is biholomorphic to a bounded domain (see \cite[pp.~23--24]{P-S}), hence is Kobayashi-hyperbolic.

Next, the holomorphic affine automorphisms of Siegel domains of the second kind are described as follows (see \cite[pp.~25-26]{P-S}):

\begin{theorem}\label{Siegelaffautom}
Any holomorphic affine automorphism of $S(\Omega,H)$ has the form
$$
\begin{array}{lll}
z&\mapsto & Az+a+2iH(b,Bw)+iH(b,b),\\
\vspace{-0.3cm}\\
w&\mapsto & Bw+b,
\end{array}
$$
with $a\in\RR^k$, $b\in\CC^{n-k}$, $A\in G(\Omega)$, $B\in\GL_{n-k}(\CC)$, where
\begin{equation}
AH(w,w')=H(Bw,Bw')\label{assoc}
\end{equation}
for all $w,w'\in\CC^{n-k}$.
\end{theorem}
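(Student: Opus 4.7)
My plan is to parametrize an arbitrary holomorphic affine map $\Phi\co\CC^n\to\CC^n$, viewed in the splitting $\CC^n=\CC^k\times\CC^{n-k}$, as
$$
\Phi(z,w)=(Az+Cw+c,\,Ez+Bw+b),
$$
with $\CC$-linear maps $A$, $B$, $C$, $E$ of appropriate sizes and constants $c\in\CC^k$, $b\in\CC^{n-k}$, and then to peel off the claimed structure one piece at a time from the requirement that $\Phi$ map $S(\Omega,H)$ bijectively onto itself.

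First I would show $E=0$. Conjugating the real translation $(z,w)\mapsto(z+a,w)\in\Aut(S(\Omega,H))$ by $\Phi$ yields the affine map $(z,w)\mapsto(z+Aa,w+Ea)$, which therefore must also preserve $S(\Omega,H)$. Writing out the defining inequality and setting $u:=\Im z-H(w,w)\in\Omega$, this means
$$
u+\Im(Aa)-2\Re H(Ea,w)-H(Ea,Ea)\in\Omega
$$
for every $u\in\Omega$ and every $w\in\CC^{n-k}$. Letting $w$ range, the $\RR$-linear map $w\mapsto-2\Re H(Ea,w)$ sweeps out a real subspace $V\subseteq\RR^k$, and the inclusion above then forces $\Omega$ to contain an affine translate of $V$; since $\Omega$ contains no line this gives $V=\{0\}$. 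Replacing $w$ by $iw$ promotes the vanishing of $\Re H(Ea,\cdot)$ to the vanishing of $H(Ea,\cdot)$, so $H(Ea,Ea)=0$, and the $\Omega$-Hermiticity of $H$ forces $Ea=0$. Since this holds for every $a\in\RR^k$ and $E$ is $\CC$-linear, $E\equiv 0$.

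With $E=0$ the invariance condition reads
$$
\Im(Az+Cw+c)-H(Bw+b,Bw+b)\in\Omega\;\Longleftrightarrow\;\Im z-H(w,w)\in\Omega,
$$
and since $\Re z$ is unconstrained while the right-hand side is independent of it, the same no-line argument as in the previous step forces the complex matrix $A$ to be real, so $\Im(Az)=A\Im z$. Setting $u:=\Im z-H(w,w)$ and collecting the $w$-dependent and constant terms into $\Psi(w):=AH(w,w)+\Im(Cw)+\Im c-H(Bw+b,Bw+b)$, the condition becomes $Au+\Psi(w)\in\Omega\iff u\in\Omega$, i.e.\ $A\Omega=\Omega-\Psi(w)$ for every $w$. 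Thus $\Omega$ is invariant under translation by $\Psi(w)-\Psi(0)$, which by the no-line property forces $\Psi$ to be constant in $w$. Matching quadratic, linear, and constant components in $w$ then yields, respectively, $AH(w,w)=H(Bw,Bw)$ (whence (\ref{assoc}) by polarization), $Cw=2iH(b,Bw)$ (by comparing real and imaginary parts), and $\Im c=H(b,b)$, i.e., $c=a+iH(b,b)$ with $a:=\Re c\in\RR^k$. The residual constant-in-$w$ condition is precisely $A\in G(\Omega)$.

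The main obstacle is the first step: it is where one must translate the global hypothesis that $\Omega$ contains no line (equivalently, that $S(\Omega,H)$ is biholomorphic to a bounded domain and therefore contains no complex affine line) into an algebraic constraint, and this requires care in varying $u$ and $w$ jointly. Everything in the separation-of-variables step is then a routine bookkeeping, and the converse direction --- that every map of the listed form lies in $\Aut(S(\Omega,H))$ --- follows from the identity $\Im\tilde z-H(\tilde w,\tilde w)=A(\Im z-H(w,w))$, which is immediate from (\ref{assoc}) together with $H(b,b)\in\RR^k$.
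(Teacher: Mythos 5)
The paper does not actually prove Theorem \ref{Siegelaffautom}: it is quoted from Pyatetskii-Shapiro \cite[pp.~25--26]{P-S}, so there is no in-paper argument to compare yours against. Taken on its own terms, your derivation is sound in structure and in almost all details. The conjugation trick in the first step is clean: conjugating the translations $z\mapsto z+a$ by $\Phi$ does produce the translations $(z,w)\mapsto(z+Aa,w+Ea)$, the no-line property of $\Omega$ correctly kills the subspace swept out by $w\mapsto -2\Re H(Ea,w)$, and $\Omega$-Hermiticity then gives $Ea=0$. The reality of $A$ and the constancy of $\Psi$ are likewise correctly forced by the absence of lines in $\Omega$ (for the ``iff'' form $A\Omega=\Omega-\Psi(w)$ you are implicitly using that $\Phi$ maps $S(\Omega,H)$ \emph{onto} itself, which is legitimate since $\Phi$ is an automorphism and its linear part, block upper triangular with diagonal blocks $A$ and $B$, must be invertible).

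The one step that is not justified as written is the extraction of $\Im c=H(b,b)$ and $A\in G(\Omega)$ by ``matching constant components''. Constancy of $\Psi$ only gives $\Psi(w)=\Psi(0)$, which yields the quadratic and linear identities but says nothing about the value $\Psi(0)$ itself; what remains is the single condition $A\Omega=\Omega-\Psi(0)$, and you must still argue that this forces $\Psi(0)=0$. It does, but by a separate (easy) argument: $A\Omega$ is an open convex cone with apex at the origin containing no line, and if it equals $\Omega-\Psi(0)$ it is also a cone with apex at $-\Psi(0)$; a convex cone with two distinct apexes is invariant under all real multiples of their difference and hence contains a line. Therefore $\Psi(0)=0$, whence $A\Omega=\Omega$ and $\Im c=H(b,b)$. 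With that half-line supplied your proof is complete, and the closing verification that every map of the stated form preserves $S(\Omega,H)$ via the identity $\Im\tilde z-H(\tilde w,\tilde w)=A\bigl(\Im z-H(w,w)\bigr)$ is correct.
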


A domain $S(\Omega,H)$ is called \emph{affinely homogeneous} if the group $\Aff(S(\Omega,H))$ of its holomorphic affine automorphisms acts on $S(\Omega,H)$ transitively. Denote by $G(\Omega,H)$ the subgroup of $G(\Omega)$ that consists of all transformations $A\in G(\Omega)$ as in Theorem \ref{Siegelaffautom}, namely, of all elements $A\in G(\Omega)$ for which there exists $B\in\GL_{n-k}(\CC)$ such that (\ref{assoc}) holds. By \cite[Lemma 1.1]{D}, the subgroup $G(\Omega,H)$ is closed in $G(\Omega)$. It is easy to deduce from Theorem \ref{Siegelaffautom} that if $S(\Omega,H)$ is affinely homogeneous, the action of $G(\Omega,H)$ (hence that of its identity component $G(\Omega,H)^{\circ}$) is transitive on $\Omega$ (see, e.g., \cite[proof of Theorem 8]{KMO}), so the cone $\Omega$ is homogeneous. Conversely, if $G(\Omega,H)$ acts on $\Omega$ transitively, the domain $S(\Omega,H)$ is affinely homogeneous.

As shown in \cite{VGP-S}, \cite{N}, every homogeneous Kobayashi-hyperbolic manifold is biholomorphic to an affinely homogeneous Siegel domain of the second kind. Such a realization is unique up to affine transformations; in general, if two Siegel domains of the second kind are biholomorphic to each other, they are also equivalent by means of a linear transformation of special form (see \cite[Theorem 11]{KMO}). The result of \cite{VGP-S}, \cite{N} is the basis of our proof of Theorem \ref{main} in the next section.

In addition, our proof relies on a description of the Lie algebra of the group $\Aut(S(\Omega,H))$ of an arbitrary Siegel domain of the second kind $S(\Omega,H)$. This algebra is isomorphic to the (real) Lie algebra of complete holomorphic vector fields on $S(\Omega,H)$, which we denote by ${\mathfrak g}(S(\Omega,H))$ or, when there is no fear of confusion, simply by ${\mathfrak g}$. The latter algebra has been extensively studied. In particular, we have (see \cite[Theorems 4 and 5]{KMO}):

\begin{theorem}\label{kmoalgebradescr}
The algebra ${\mathfrak g}={\mathfrak g}(S(\Omega,H))$ admits a grading
$$
{\mathfrak g}={\mathfrak g}_{-1}\oplus{\mathfrak g}_{-1/2}\oplus{\mathfrak g}_0\oplus{\mathfrak g}_{1/2}\oplus{\mathfrak g}_1,
$$
with ${\mathfrak g}_{\nu}$ being the eigenspace with eigenvalue $\nu$ of $\ad\partial$, where
$\displaystyle\partial:=z\cdot\frac{\partial}{\partial z}+\frac{1}{2}w\cdot\frac{\partial}{\partial w}$. 
Here
$$
\begin{array}{ll}
{\mathfrak g}_{-1}=\displaystyle\left\{a\cdot\frac{\partial}{\partial z}:a\in\RR^k\right\},&\dim {\mathfrak g}_{-1}=k,\\
\vspace{-0.1cm}\\
{\mathfrak g}_{-1/2}=\displaystyle\left\{2i H(b,w)\cdot\frac{\partial}{\partial z}+b\cdot\frac{\partial}{\partial w}:b\in\CC^{n-k}\right\},&\dim {\mathfrak g}_{-1/2}=2(n-k),
\end{array}
$$
and ${\mathfrak g}_0$ consists of all vector fields of the form
\begin{equation}
(Az)\cdot\frac{\partial}{\partial z}+(Bw)\cdot\frac{\partial}{\partial w},\label{g0}
\end{equation}
with $A\in{\mathfrak g}(\Omega)$, $B\in{\mathfrak{gl}}_{n-k}(\CC)$ and
\begin{equation}
AH(w,w')=H(Bw,w')+H(w,Bw')\label{assoc1}
\end{equation}
for all $w,w'\in\CC^{n-k}$. Furthermore, one has
\begin{equation} 
\dim {\mathfrak g}_{1/2}\le 2(n-k),\qquad \dim {\mathfrak g}_1\le k.\label{estimm}
\end{equation}
\end{theorem}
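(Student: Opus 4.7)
The plan is to exploit the fact that the vector field $\partial$ itself is complete: its flow is the dilation $(z,w)\mapsto (e^t z,e^{t/2}w)$, which preserves $S(\Omega,H)$ because $\Omega$ is a cone. Thus $\partial\in{\mathfrak g}$ and $\ad\partial$ is a derivation of the finite-dimensional real Lie algebra ${\mathfrak g}$; here finite-dimensionality follows from Kobayashi-hyperbolicity of $S(\Omega,H)$ together with the fact that $\Aut(S(\Omega,H))$ is a Lie group. Since $\exp(t\partial)$ is a diagonalizable real one-parameter subgroup of $\Aut(S(\Omega,H))$, the derivation $\ad\partial$ is semisimple with real eigenvalues, so ${\mathfrak g}$ splits as a direct sum of $\ad\partial$-eigenspaces.

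Next I would identify explicit elements in each low-weight piece via the homogeneity rule: for $X=f(z,w)\partial_z+g(z,w)\partial_w$ whose scalar components are monomials of bidegree $(\alpha,\beta)$ in $(z,w)$, a direct bracket computation shows that the $f\partial_z$ contribution has $\ad\partial$-weight $|\alpha|+|\beta|/2-1$ and the $g\partial_w$ contribution has weight $|\alpha|+|\beta|/2-1/2$. Applying this to the affine automorphisms of Theorem \ref{Siegelaffautom}: real translations $a\cdot\partial_z$ are complete of weight $-1$; differentiating the subgroup obtained by setting $A=I$, $a=0$, $B=I$, $b=tb_0$ at $t=0$ yields the Heisenberg-type fields $2iH(b,w)\cdot\partial_z+b\cdot\partial_w$ of weight $-1/2$; and differentiating $A=e^{tA_0}$, $B=e^{tB_0}$, $a=b=0$ at $t=0$ produces the linear fields (\ref{g0}), with (\ref{assoc1}) emerging as the infinitesimal form of (\ref{assoc}). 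This furnishes the inclusions of the three listed spaces into ${\mathfrak g}_{-1}$, ${\mathfrak g}_{-1/2}$, and ${\mathfrak g}_0$, respectively.

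The main obstacle and technical heart of the argument is to prove the reverse inclusions for the low-weight pieces, to rule out $\ad\partial$-eigenvalues outside $\{-1,-1/2,0,1/2,1\}$, and to establish the estimates (\ref{estimm}). For this I would show that every $X\in{\mathfrak g}$ is polynomial in $(z,w)$ of controlled bidegree, with the $\partial_z$-coefficient of $\partial$-weight at most $2$ and the $\partial_w$-coefficient of $\partial$-weight at most $3/2$. The natural route is to pass to a bounded realization of $S(\Omega,H)$ and argue that complete holomorphic vector fields extend regularly to the distinguished boundary orbit corresponding to infinity in the $\Omega$-direction, where properness of the $\Aut$-action (a consequence of Kobayashi-hyperbolicity) forces uniform estimates and in turn the stated polynomial bounds. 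This boundary/growth analysis is the deepest input and is where the bulk of the work lies.

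Once polynomial structure is in place, the remaining conclusions are algebraic. Matching monomial bidegrees to the eigenvalue equation $\ad\partial(X)=\nu X$ enumerates all possibilities and mechanically produces the list $\{-1,-1/2,0,1/2,1\}$ together with the claimed exhaustive descriptions of ${\mathfrak g}_{-1}$, ${\mathfrak g}_{-1/2}$, and ${\mathfrak g}_0$. For (\ref{estimm}), bracketing with a generic real translation in ${\mathfrak g}_{-1}$ defines an injection from ${\mathfrak g}_1$ into a subspace of ${\mathfrak g}_0$ of dimension at most $k$, while bracketing against a generic element of ${\mathfrak g}_{-1/2}$ similarly embeds ${\mathfrak g}_{1/2}$ into ${\mathfrak g}_{-1/2}$, giving $\dim{\mathfrak g}_{1/2}\le 2(n-k)$.
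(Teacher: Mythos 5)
The paper does not prove Theorem \ref{kmoalgebradescr} at all: it is quoted from \cite[Theorems 4 and 5]{KMO} (see also \cite[Chapter V, \S 1--2]{S}), so your attempt has to be measured against the argument in those sources. Your outline does reproduce its architecture correctly up to a point: $\partial$ is indeed complete because the dilations $(z,w)\mapsto(e^tz,e^{t/2}w)$ preserve $S(\Omega,H)$, and differentiating the affine automorphisms of Theorem \ref{Siegelaffautom} does produce the displayed generators of ${\mathfrak g}_{-1}$, ${\mathfrak g}_{-1/2}$ and the fields (\ref{g0}) with (\ref{assoc1}) as the infinitesimal version of (\ref{assoc}).

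However, there are genuine gaps in the two places where the theorem actually has content. First, everything hinges on showing that every complete holomorphic vector field on $S(\Omega,H)$ is polynomial with the stated weighted degree bounds; your paragraph on this names a strategy (``pass to a bounded realization\dots boundary/growth analysis\dots forces uniform estimates'') but carries out none of it, and this is precisely the analytic core of \cite[Theorem 4]{KMO}. Relatedly, your justification that $\ad\partial$ is semisimple is circular: diagonalizability of $e^{t\,\ad\partial}$ on ${\mathfrak g}$ is equivalent to the grading you are trying to establish and cannot be read off from the fact that the flow of $\partial$ is linear and diagonal on $\CC^n$. Second, your derivation of (\ref{estimm}) fails on weight bookkeeping: $[{\mathfrak g}_{-1/2},{\mathfrak g}_{1/2}]\subset{\mathfrak g}_0$, not ${\mathfrak g}_{-1/2}$, so the embedding you claim for ${\mathfrak g}_{1/2}$ does not exist as stated; and a single bracket of ${\mathfrak g}_1$ with a translation lands in ${\mathfrak g}_0$, whose dimension is in general much larger than $k$, so no bound $\dim{\mathfrak g}_1\le k$ follows. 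The correct maps are $\ad\bigl(e\cdot\frac{\partial}{\partial z}\bigr):{\mathfrak g}_{1/2}\to{\mathfrak g}_{-1/2}$ and $\bigl(\ad\bigl(e\cdot\frac{\partial}{\partial z}\bigr)\bigr)^2:{\mathfrak g}_1\to{\mathfrak g}_{-1}$ for a point $e\in\Omega$; for instance, for $X=a(z,z)\cdot\frac{\partial}{\partial z}+b(z,w)\cdot\frac{\partial}{\partial w}$ the second map sends $X$ to $2a(e,e)\cdot\frac{\partial}{\partial z}$, and proving that $a(e,e)=0$ forces $X=0$ is not a genericity statement but a real lemma exploiting the constraints $A_{\mathbf x}\in{\mathfrak g}(\Omega)$ of Theorem \ref{descrg1} and the fact that $\Omega$ is an open convex cone containing no line. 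That injectivity argument is the substance of \cite[Theorem 5]{KMO} and is absent from your proposal.
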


It is then clear that the matrices $A$ that appear in (\ref{g0}) form the Lie algebra of $G(\Omega,H)$ and that ${\mathfrak g}_{-1}\oplus{\mathfrak g}_{-1/2}\oplus{\mathfrak g}_0$ is isomorphic to the Lie algebra of the group $\Aff(S(\Omega,H))$ (compare conditions (\ref{assoc}) and (\ref{assoc1})). 

Following \cite{S}, for a pair of matrices $A,B$ satisfying (\ref{assoc1}) we say that $B$ is \emph{associated to $A$} (with respect to $H$). Let ${\mathcal L}$ be the (real) subspace of ${\mathfrak{gl}}_{n-k}(\CC)$ of all matrices associated to the zero matrix in ${\mathfrak g}(\Omega)$, i.e., matrices skew-Hermitian with respect to each component of $H$. Set $s:=\dim {\mathcal L}$. Then we have
\begin{equation}
\dim {\mathfrak g}_0=s+\dim G(\Omega,H)\le s+\dim {\mathfrak g}(\Omega).\label{estim1}
\end{equation}
By Theorem \ref{kmoalgebradescr} and inequality (\ref{estim1}) one obtains
\begin{equation}
d(S(\Omega,H))\le k+2(n-k)+s+\dim {\mathfrak g}(\Omega)+\dim {\mathfrak g}_{1/2}+ \dim {\mathfrak g}_1,\label{estim 8}
\end{equation}
which, combined with (\ref{estimm}), leads to
\begin{equation}
d(S(\Omega,H))\le 2k+4(n-k)+s+\dim {\mathfrak g}(\Omega).\label{estim2}
\end{equation}
Further, since there exists a positive-definite linear combination ${\mathbf H}$ of the components of the Hermitian form $H$, the subspace ${\mathcal L}$ lies in the Lie algebra of matrices skew-Hermitian with respect to ${\mathbf H}$, thus
\begin{equation}
s\le (n-k)^2.\label{ests}
\end{equation}
By (\ref{ests}), inequality (\ref{estim2}) yields
\begin{equation}
d(S(\Omega,H))\le 2k+4(n-k)+(n-k)^2+\dim {\mathfrak g}(\Omega).\label{estim3}
\end{equation}
Combining (\ref{estim3}) with (\ref{conineq}), we deduce the following useful upper bound:
\begin{equation}
d(S(\Omega,H))\le\displaystyle\frac{3k^2}{2}-\left(2n+\frac{5}{2}\right)k+n^2+4n+1.\label{estim4}
\end{equation}

Next, by \cite[Chapter V, Proposition 2.1]{S} the component ${\mathfrak g}_{1/2}$ of the Lie algebra ${\mathfrak g}={\mathfrak g}(S(\Omega,H))$ is described as follows:

\begin{theorem}\label{descrg1/2}
The subspace ${\mathfrak g}_{1/2}$ consists of all vector fields of the form
$$
2iH(\Phi(\bar z),w)\cdot\frac{\partial}{\partial z}+(\Phi(z)+c(w,w))\cdot\frac{\partial}{\partial w},
$$
where $\Phi:\CC^k\to\CC^{n-k}$ is a $\CC$-linear map such that for every ${\mathbf w}\in\CC^{n-k}$ one has
$$
\Phi_{{\mathbf w}}:=\left[x\mapsto\Im H({\mathbf w},\Phi(x)),\,\, x\in\RR^k\right]\in{\mathfrak g}(\Omega),\label{Phiw0}
$$
and $c:\CC^{n-k}\times\CC^{n-k}\to\CC^{n-k}$ is a symmetric $\CC$-bilinear form on $\CC^{n-k}$ with values in $\CC^{n-k}$ satisfying the condition
$$
H(w,c(w',w'))=2iH(\Phi(H(w',w)),w')\label{cond1}
$$
for all $w,w'\in\CC^{n-k}$. 
\end{theorem}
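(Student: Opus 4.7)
The plan is to characterize elements $X \in \mathfrak g_{1/2}$ by combining the known polynomial nature of complete holomorphic vector fields on $S(\Omega, H)$ with the grading structure from Theorem \ref{kmoalgebradescr}, and then imposing the bracket containments $[\mathfrak g_{-1}, \mathfrak g_{1/2}] \subseteq \mathfrak g_{-1/2}$ and $[\mathfrak g_{-1/2}, \mathfrak g_{1/2}] \subseteq \mathfrak g_0$. Writing $X = P(z,w) \cdot \partial/\partial z + Q(z,w) \cdot \partial/\partial w$, the eigenvalue equation $[\partial, X] = \frac{1}{2}X$, read under the weighting $\wt z = 1$, $\wt w = \tfrac12$, forces $P$ to be of weighted degree $\tfrac32$ and $Q$ of weighted degree $1$. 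Hence $Q(z,w) = \Phi(z) + c(w,w)$ for some $\Phi \in \Hom_{\CC}(\CC^k, \CC^{n-k})$ and symmetric $\CC$-bilinear $c \colon \CC^{n-k} \times \CC^{n-k} \to \CC^{n-k}$, while $P(z,w) = L(z,w) + T(w,w,w)$ with $L$ bilinear in $(z,w)$ and $T$ symmetric trilinear in $w$.

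Applying $[\mathfrak g_{-1}, \mathfrak g_{1/2}] \subseteq \mathfrak g_{-1/2}$: for $a \in \RR^k$ the bracket $[a \cdot \partial/\partial z, X]$ must equal $2iH(b, w) \partial/\partial z + b \partial/\partial w$ for some $b \in \CC^{n-k}$. Matching the $\partial/\partial w$-component yields $b = \Phi(a)$; matching the $\partial/\partial z$-component then yields $L(a, w) = 2iH(\Phi(a), w)$ for every real $a$. Because the right-hand side is the composition of two operations anti-linear in $a$ (when $a$ is allowed to be complex), it is holomorphic in $z$, and by $\CC$-linear extension one obtains $L(z, w) = 2iH(\Phi(\bar z), w)$, matching the form asserted in the theorem.

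The decisive step uses $[\mathfrak g_{-1/2}, \mathfrak g_{1/2}] \subseteq \mathfrak g_0$. For a general generator $Y = 2iH(b,w) \partial/\partial z + b \partial/\partial w$ of $\mathfrak g_{-1/2}$, one computes $[Y, X]$ and requires the result to have the shape $(Az) \partial/\partial z + (Bw) \partial/\partial w$ with $A \in \mathfrak g(\Omega)$ and $B$ associated to $A$ via (\ref{assoc1}). Separating contributions by monomial type, the cubic-in-$w$ contribution of $T$ to the $\partial/\partial z$-component has no partner in the $\mathfrak g_0$ normal form, forcing $T \equiv 0$. The surviving equations identify the induced $A$ with $\Phi_{\mathbf w}$ at $\mathbf w = b$, so the requirement $A \in \mathfrak g(\Omega)$ becomes precisely $\Phi_{\mathbf w} \in \mathfrak g(\Omega)$; the identification of $B$ as $w' \mapsto c(b, w')$ then converts the associated-matrix relation (\ref{assoc1}) into the identity $H(w, c(w', w')) = 2iH(\Phi(H(w', w)), w')$.

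The principal obstacle is the bookkeeping in the last bracket computation: the mixed linear/anti-linear behaviour of the components of $H$ must be tracked carefully across several derivatives, and monomials in $(z,w)$ must be separated cleanly so as to both eliminate the trilinear term $T$ and extract the compatibility identity in its stated form. The converse direction, that any pair $(\Phi, c)$ satisfying the stated conditions produces a complete holomorphic vector field in $\mathfrak g_{1/2}$, is handled by verifying either directly that the polynomial field integrates to a one-parameter group of holomorphic automorphisms, or equivalently that its flow preserves the defining inequality $\Im z - H(w, w) \in \Omega$, using $\Phi_{\mathbf w} \in \mathfrak g(\Omega)$ together with the compatibility identity to control the variation of $\Im z - H(w,w)$ along orbits.
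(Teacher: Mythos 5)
First, a point of reference: the paper contains no proof of Theorem \ref{descrg1/2} at all --- it is quoted from Satake \cite[Chapter V, Proposition 2.1]{S}, with the underlying grading taken from \cite{KMO} --- so there is no in-paper argument to compare yours against. Your overall plan (weighted homogeneity forced by $[\partial,X]=\tfrac{1}{2}X$, followed by the containments $[{\mathfrak g}_{-1},{\mathfrak g}_{1/2}]\subseteq{\mathfrak g}_{-1/2}$ and $[{\mathfrak g}_{-1/2},{\mathfrak g}_{1/2}]\subseteq{\mathfrak g}_0$) is the standard route to the \emph{necessity} of the stated form, and your first bracket step, identifying $L(z,w)=2iH(\Phi(\bar z),w)$, is essentially correct.

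Two steps are genuinely gapped, however. First, the elimination of the trilinear term $T$ does not work as you describe. With $Y=2iH(b,w)\cdot\partial/\partial z+b\cdot\partial/\partial w$, the quadratic-in-$w$ part of the $\partial/\partial z$-component of the bracket is, up to an overall sign, $3T(b,w,w)-4H(\Phi(H(w,b)),w)-2iH(b,c(w,w))$; so the $T$-contribution does have partners, and degree counting forces only the vanishing of this sum, not of $T$ itself. What actually kills $T$ is that $3T(b,w,w)$ is $\CC$-linear in $b$ while the other two terms are anti-linear in $b$, so the two groups must vanish separately; the vanishing of the anti-linear group is then precisely the identity $H(b,c(w,w))=2iH(\Phi(H(w,b)),w)$. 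In particular, the compatibility condition on $c$ arises from the $\partial/\partial z$-component of this bracket, not from relation (\ref{assoc1}) as you assert, and the matrix induced in the $\partial/\partial w$-slot is $w\mapsto 2i\Phi(H(b,w))+2c(b,w)$, not $w\mapsto c(b,w)$. Second, and more seriously, your argument only establishes necessity. The converse --- that every field of the stated form is complete on $S(\Omega,H)$ --- is the substantive half of the proposition; it requires the criterion that a weighted-homogeneous holomorphic polynomial field whose local flow preserves $S(\Omega,H)$ (equivalently, whose real part is tangent to the smooth part of the boundary) is automatically complete, which in turn rests on $S(\Omega,H)$ being biholomorphic to a bounded domain. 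As written, ``verifying that the flow preserves the defining inequality'' restates what must be proved rather than proving it.
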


Further, by \cite[Chapter V, Proposition 2.2]{S}, the component ${\mathfrak g}_1$ of ${\mathfrak g}={\mathfrak g}(S(\Omega,H))$ admits the following description:

\begin{theorem}\label{descrg1}
The subspace ${\mathfrak g}_1$ consists of all vector fields of the form
$$
a(z,z)\cdot\frac{\partial}{\partial z}+b(z,w)\cdot\frac{\partial}{\partial w},
$$
where $a:\RR^k\times\RR^k\to\RR^k$ is a symmetric $\RR$-bilinear form on $\RR^k$ with values in $\RR^k$ {\rm (}which we extend to a symmetric $\CC$-bilinear form on $\CC^k$ with values in $\CC^k${\rm )} such that for every ${\mathbf x}\in\RR^k$ one has
$$
A_{{\mathbf x}}:=\left[x\mapsto a({\mathbf x},x),\,\,x\in\RR^k\right]\in{\mathfrak g}(\Omega),\label{idents1}
$$
and $b:\CC^k\times\CC^{n-k}\to\CC^{n-k}$ is a $\CC$-bilinear map such that, if for ${\mathbf x}\in\RR^k$ one sets
$$
B_{{\mathbf x}}:=\left[w\mapsto\frac{1}{2}b({\mathbf x},w),\,\,w\in\CC^{n-k}\right],\label{idents2}
$$
the following conditions are satisfied:
\begin{itemize}

\item[{\rm (i)}] $B_{{\mathbf x}}$ is associated to $A_{{\mathbf x}}$ and $\Im\tr B_{{\mathbf x}}=0$ for all ${\mathbf x}\in\RR^k$,
\vspace{0.1cm}

\item[{\rm (ii)}] for every pair ${\mathbf w},{\mathbf w}'\in\CC^{n-k}$ one has
$$
B_{{\mathbf w},{\mathbf w}'}:=\left[x\mapsto\Im H({\mathbf w'},b(x,{\mathbf w})),\,\,x\in\RR^k\right]\in{\mathfrak g}(\Omega),
$$

\item[{\rm (iii)}] $H(w,b(H(w',w''),w''))=H(b(H(w'',w),w'),w'')$ for all $w,w',w''\in\CC^{n-k}$.

\end{itemize}
\end{theorem}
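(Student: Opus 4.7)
The plan is to work from Theorem \ref{kmoalgebradescr}, which identifies $\mathfrak{g}_1$ as the eigenspace of $\ad\partial$ with eigenvalue $1$. Since $\partial$ assigns weight $1$ to the $z$-variables and $1/2$ to the $w$-variables, a monomial $z^\mu w^\nu\partial/\partial z_i$ has $\ad\partial$-weight $|\mu|+|\nu|/2-1$, while $z^\mu w^\nu\partial/\partial w_\alpha$ has weight $|\mu|+|\nu|/2-1/2$. Solving for weight $1$ gives the possible bidegrees $(2,0),(1,2),(0,4)$ in the $\partial/\partial z$-component and $(1,1),(0,3)$ in the $\partial/\partial w$-component, so the most general element of $\mathfrak{g}_1$ has the polynomial form
$$V=\bigl[a(z,z)+c(z,w,w)+d(w,w,w,w)\bigr]\cdot\frac{\partial}{\partial z}+\bigl[b(z,w)+e(w,w,w)\bigr]\cdot\frac{\partial}{\partial w},$$
with $a,b,c,d,e$ multilinear forms of the indicated types and appropriate symmetries. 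The goal reduces to showing that completeness of $V$ forces $c=d=e=0$ and imposes conditions (i)--(iii) on $a$ and $b$.

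Rather than work directly with the boundary-tangency characterisation of completeness, I would exploit the bracket structure of the grading. For $X={\mathbf x}\cdot\partial/\partial z\in\mathfrak{g}_{-1}$ a direct computation yields
$$[X,V]=\bigl[2a({\mathbf x},z)+c({\mathbf x},w,w)\bigr]\cdot\frac{\partial}{\partial z}+b({\mathbf x},w)\cdot\frac{\partial}{\partial w}.$$
Since $[\mathfrak{g}_{-1},\mathfrak{g}_1]\subset\mathfrak{g}_0$ and, by Theorem \ref{kmoalgebradescr}, the $\partial/\partial z$-component of any element of $\mathfrak{g}_0$ is linear in $z$ with no $w$-dependence, the bidegree-$(0,2)$ term $c({\mathbf x},w,w)$ must vanish for every ${\mathbf x}$, so $c\equiv 0$. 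The surviving identity says, again via Theorem \ref{kmoalgebradescr}, that the linear map $A_{{\mathbf x}}(x)=a({\mathbf x},x)$ lies in $\mathfrak{g}(\Omega)$ and that $B_{{\mathbf x}}=[w\mapsto\tfrac12 b({\mathbf x},w)]$ is associated to $A_{{\mathbf x}}$ via (\ref{assoc1}), which is the bulk of condition (i). The remaining trace constraint $\Im\tr B_{{\mathbf x}}=0$ is obtained by applying $V+\bar V$ to the $\RR^k$-valued defining function $\rho(z,w):=\Im z-H(w,w)$ and extracting the leading reality condition on $\rho=0$.

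Analogously, bracketing $V$ with $Y\in\mathfrak{g}_{-1/2}$ lands in $\mathfrak{g}_{1/2}$, whose shape is pinned down by Theorem \ref{descrg1/2}; matching bidegrees against the permissible form forces $d\equiv 0$ and $e\equiv 0$, and the surviving constraint on $b$ is precisely the membership $B_{{\mathbf w},{\mathbf w}'}\in\mathfrak{g}(\Omega)$ of condition (ii). Condition (iii) emerges from the Jacobi identity applied to a pair $Y_1,Y_2\in\mathfrak{g}_{-1/2}$: combining $[Y_1,[Y_2,V]]-[Y_2,[Y_1,V]]=[[Y_1,Y_2],V]$ with $[\mathfrak{g}_{-1/2},\mathfrak{g}_{-1/2}]\subset\mathfrak{g}_{-1}$ and reading the $\partial/\partial w$-component produces a cubic symmetry relation on $H$ and $b$ that is precisely (iii). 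For the converse, that every $V$ satisfying (i)--(iii) is genuinely complete, one exhibits the one-parameter subgroup of $\Aut(S(\Omega,H))$ by direct integration of the flow, most cleanly after a Cayley transform to a bounded realisation of $S(\Omega,H)$ where $V$ becomes an isotropy element at a boundary point.

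The main obstacle will be deriving condition (iii) as a genuine consequence of the Jacobi identity rather than as an independent input: one must carefully track how $H$ threads through each iterated bracket, and the trace vanishing $\Im\tr B_{{\mathbf x}}=0$ needs a separate argument tied to the real structure of $\mathfrak{g}$. The most delicate auxiliary step is establishing sufficiency in full generality -- that no further polynomial constraint slips through -- which is why passing to the bounded-domain picture via a Cayley transform is a natural finishing move.
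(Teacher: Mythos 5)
Your proposal must be judged as a free-standing reconstruction, because the paper contains no proof of Theorem \ref{descrg1} at all: the statement is imported verbatim from Satake \cite[Chapter V, Proposition 2.2]{S}, just as Theorem \ref{descrg1/2} is imported from \cite[Chapter V, Proposition 2.1]{S}. Your necessity half is broadly the right strategy, and much of it works: the weight computation and the polynomial ansatz are correct, and bracketing with $X\in{\mathfrak g}_{-1}$, read against the description of ${\mathfrak g}_0$ in Theorem \ref{kmoalgebradescr}, does give $c\equiv 0$, $A_{\mathbf x}\in{\mathfrak g}(\Omega)$ and the associatedness of $B_{\mathbf x}$. But two of your mechanisms fail as stated. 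First, matching bidegrees in $[Y,V]$, $Y\in{\mathfrak g}_{-1/2}$, does not by itself kill $d$ and $e$: the vanishing of the $(0,3)$-part of the $\partial/\partial z$-component gives only the coupled relation $2d(b_0,w,w,w)=iH(b_0,e(w,w,w))$; to conclude $d=e=0$ you need the further observation that the left-hand side is $\CC$-linear in $b_0$ while the right-hand side is conjugate-linear ($H$ is anti-linear in its first slot), so both sides vanish identically. Second, and more seriously, condition (iii) cannot ``emerge from the Jacobi identity'': the identity $[Y_1,[Y_2,V]]-[Y_2,[Y_1,V]]=[[Y_1,Y_2],V]$ holds automatically for \emph{any} three vector fields, so it imposes no constraint whatsoever on $V$. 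What actually produces (iii) is the single bracket you have already taken: writing $[Y,V]\in{\mathfrak g}_{1/2}$ in the normal form of Theorem \ref{descrg1/2}, with $\Phi(z)=b(z,b_0)$ and quadratic term $q(w,w)=2ib(H(b_0,w),w)$, the compatibility condition $H(w,q(w',w'))=2iH(\Phi(H(w',w)),w')$ of that theorem is, after renaming variables, precisely (iii). Likewise your route to $\Im\tr B_{\mathbf x}=0$ is only a phrase: tangency of $V+\bar V$ along $\{\Im z=H(w,w)\}$ is legitimate only after you prove that the flow of $\Re V$ preserves the Shilov boundary, and you never verify that the resulting identity ($\Im H(w,b(H(w,w),w))=0$) reduces, modulo the other conditions, to the trace condition.

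The deepest gap is sufficiency. A general Siegel domain of the second kind is neither symmetric nor quasi-symmetric: there is no Cayley transform to a canonical bounded model in which ${\mathfrak g}_1$ becomes a recognizable isotropy algebra, and even in a bounded realization (which exists by \cite{P-S}) the observation that ``$V$ becomes an isotropy element at a boundary point'' integrates nothing -- a holomorphic field vanishing at a boundary point of a bounded domain need not be complete in the domain. Note that the bracket computations in your first half only show that the conditions (i)--(iii) are \emph{necessary} for membership in the graded algebra; the analytic content of the theorem is the converse, namely that every polynomial field satisfying (i)--(iii) generates a one-parameter group of automorphisms of $S(\Omega,H)$, and this is exactly where the cited source has to do real work. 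Your proposal contains no argument for it, so as it stands it proves at most one inclusion of the asserted equality.
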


Next, let us recall the well-known classification, up to linear equivalence, of homogeneous convex cones not containing lines in dimensions $k=2,3,4$ (see, e.g., \cite[pp.~38--41]{KT}), which will be also required for our proof of Theorem \ref{main}:

\begin{itemize}

\item [$k=2$:] 

\begin{itemize}

\item[]

$\Omega_1:=\left\{(x_1,x_2)\in\RR^2:x_1>0,\,\,x_2>0\right\}$, where the algebra ${\mathfrak g}(\Omega_1)$ consists of all diagonal matrices, hence $\dim {\mathfrak g}(\Omega_1)=2$,
\end{itemize}
\vspace{0.3cm}

\item [$k=3$:] 

\begin{itemize}

\item[(i)] $\Omega_2:=\left\{(x_1,x_2,x_3)\in\RR^3:x_1>0,\,\,x_2>0,\,\,x_3>0\right\}$, where the algebra ${\mathfrak g}(\Omega_2)$ consists of all diagonal matrices, hence $\dim {\mathfrak g}(\Omega_2)=3$,
\vspace{0.1cm}

\item[(ii)] $\Omega_3:=\left\{(x_1,x_2,x_3)\in\RR^3:x_1^2-x_2^2-x_3^2>0,\,\,x_1>0\right\}$, where one has ${\mathfrak g}(\Omega_3)={\mathfrak c}({\mathfrak{gl}}_3(\RR))\oplus{\mathfrak o}_{1,2}$, hence $\dim {\mathfrak g}(\Omega_3)=4$; here for any Lie algebra ${\mathfrak h}$ we denote by ${\mathfrak c}({\mathfrak h})$ its center,

\end{itemize}
\vspace{0.3cm}

\item [$k=4$:] 

\begin{itemize}

\item[(i)] $\Omega_4:=\left\{(x_1,x_2,x_3,x_4)\in\RR^4:x_1>0,\,\,x_2>0,\,\,x_3>0,\,\,x_4>0\right\}$, where the algebra ${\mathfrak g}(\Omega_4)$ consists of all diagonal matrices, hence we have $\dim {\mathfrak g}(\Omega_4)=4$,
\vspace{0.1cm}

\item[(ii)] $\Omega_5:=\left\{(x_1,x_2,x_3,x_4)\in\RR^4: x_1^2-x_2^2-x_3^2>0,\,\,x_1>0,\,\,x_4>0\right\}$, where the algebra ${\mathfrak g}(\Omega_5)=\left({\mathfrak c}({\mathfrak{gl}}_3(\RR))\oplus{\mathfrak o}_{1,2}\right)\oplus\RR$ consists of block-diagonal matrices with blocks of sizes $3\times 3$ and $1\times 1$ corresponding to the two summands, hence $\dim {\mathfrak g}(\Omega_5)=5$,
\vspace{0.1cm}

\item[(iii)] $\Omega_6:=\left\{(x_1,x_2,x_3,x_4)\in\RR^4: x_1^2-x_2^2-x_3^2-x_4^2>0,\,\,x_1>0\right\}$, where ${\mathfrak g}(\Omega_6)={\mathfrak c}({\mathfrak{gl}}_4(\RR))\oplus{\mathfrak o}_{1,3}$, hence $\dim {\mathfrak g}(\Omega_6)=7$.
\end{itemize}
\end{itemize}

Finally, recall that in \cite{C}, \'E. Cartan found all homogeneous bounded domains in $\CC^2$ and $\CC^3$. We will now state an extension of Cartan's theorem to the case of Kobayashi-hyperbolic manifolds. A short proof based on Siegel domains of the second kind is given in \cite[Theorem 2.6]{I6}.

\begin{theorem}\label{cartansclass}\hfill 
\begin{itemize}

\item[{\rm (1)}] Every homogeneous Kobayashi-hyperbolic manifold of dimension $2$ is biholomorphic to one of

\begin{itemize}

\item[{\rm(i)}] $B^2$,
\vspace{0.1cm}

\item[{\rm(ii)}] $B^1\times B^1$.
\end{itemize}
\vspace{0.3cm}
 
\item[{\rm (2)}] Every homogeneous Kobayashi-hyperbolic manifold of dimension $3$ is biholomorphic to one of

\begin{itemize}

\item[{\rm(i)}] $B^3$,
\vspace{0.1cm}

\item[{\rm(ii)}] $B^2\times B^1$,
\vspace{0.1cm}

\item[{\rm(iii)}] $B^1\times B^1\times B^1$,
\vspace{0.1cm}

\item[{\rm(iv)}] the tube domain $T_3$ defined in {\rm (\ref{domaint3})}.

\end{itemize}
\end{itemize}

\end{theorem}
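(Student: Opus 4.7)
The plan is to invoke the Nakajima extension of the Vinberg--Gindikin--Pyatetskii-Shapiro theorem recalled in Section \ref{prelim}: every homogeneous Kobayashi-hyperbolic manifold $M$ of complex dimension $n$ is biholomorphic to an affinely homogeneous Siegel domain of the second kind $S(\Omega,H)\subset\CC^n$, with $\Omega\subset\RR^k$ ($1\le k\le n$) an open convex cone without lines and $H$ an $\Omega$-Hermitian form on $\CC^{n-k}$. Affine homogeneity forces $G(\Omega,H)$, hence $G(\Omega)$, to act transitively on $\Omega$, so $\Omega$ is homogeneous. The proof then reduces to an enumeration over $k$, using the classification of homogeneous convex cones without lines of dimensions $1$, $2$, $3$ recalled in the excerpt (in dimension one only $\RR_{>0}$; in dimension two only $\Omega_1$; in dimension three only $\Omega_2$ and $\Omega_3$).

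For part (1), with $n=2$, there are two cases. If $k=2$, then $n-k=0$ forces $H=0$ and $\Omega=\Omega_1$, so $S(\Omega_1,0)=\{z\in\CC^2:\Im z_1>0,\,\Im z_2>0\}$ is biholomorphic to $B^1\times B^1$. If $k=1$, then $\Omega=\RR_{>0}$ and $H$ is a positive multiple of $|w|^2$; after rescaling $w$ the domain $S(\Omega,H)=\{(z,w)\in\CC^2:\Im z-|w|^2>0\}$ is the standard unbounded realization of $B^2$.

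For part (2), with $n=3$, the same enumeration runs over $k\in\{1,2,3\}$. For $k=3$ we have $H=0$ and $\Omega$ is either $\Omega_2$ (giving the tube $B^1\times B^1\times B^1$) or $\Omega_3$ (giving the tube $T_3$ of (\ref{domaint3})). For $k=1$, $H$ is a positive-definite Hermitian form on $\CC^2$ and, after a $\CC$-linear change of coordinates in $w$, the domain $S(\Omega,H)$ is the standard unbounded realization of $B^3$. The only case that requires a brief computation is $k=2$, $n-k=1$: then $\Omega=\Omega_1$ and $H(w,w)=(\alpha|w|^2,\beta|w|^2)$ with $\alpha,\beta\ge 0$, not both zero (lest $H$ fail to be $\Omega_1$-Hermitian). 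A diagonal matrix $A=\mathrm{diag}(a_1,a_2)\in G(\Omega_1,H)$ with associated $B\in\GL_1(\CC)=\CC^*$ must, by (\ref{assoc}), satisfy $\alpha a_1=\alpha|B|^2$ and $\beta a_2=\beta|B|^2$. If both $\alpha,\beta>0$, this forces $a_1=a_2=|B|^2$, so $G(\Omega_1,H)$ is one-dimensional and cannot act transitively on the two-dimensional cone $\Omega_1$, contradicting affine homogeneity. Hence exactly one of $\alpha,\beta$ vanishes; normalizing the other to $1$, the domain $S(\Omega_1,H)$ splits as $\{\Im z_1-|w|^2>0\}\times\{\Im z_2>0\}$, biholomorphic to $B^2\times B^1$.

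The enumeration is routine once the classification of homogeneous cones is in hand; the only delicate step is excluding a non-split Siegel domain in the $k=2$, $n=3$ case via the transitivity obstruction above, so I expect this to be the sole place where any real argument is needed.
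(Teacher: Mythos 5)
Your proof is correct and takes essentially the same approach as the paper's: the paper defers the proof of this theorem to \cite[Theorem 2.6]{I6}, described precisely as ``a short proof based on Siegel domains of the second kind,'' i.e., reduction via \cite{VGP-S}, \cite{N} to affinely homogeneous Siegel domains followed by an enumeration over the classified homogeneous cones in dimensions $1$, $2$, $3$. Your transitivity obstruction in the $k=2$, $n=3$ case (computing $G(\Omega_1,H)$ and noting it is too small to act transitively on $\Omega_1$ unless one of $\alpha,\beta$ vanishes) is exactly the style of argument the paper itself employs in Cases (1) and (2a) of the proof of its main theorem.
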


\section{Proof of Theorem \ref{main}}\label{proof}
\setcounter{equation}{0}

By \cite{VGP-S}, \cite{N}, the manifold $M$ is biholomorphic to a Siegel domain of the second kind $S(\Omega,H)$. Since for each domain listed in Theorem \ref{cartansclass} the dimension of its automorphism group is greater than $n^2-3$, it follows that $n\ge 4$. Also, as $M$ is not biholomorphic to $B^n$, we have $k\ge 2$.

Next, the following lemma rules out a large number of the remaining possibilities.

\begin{lemma}\label{n5k3} \it For $n\ge 5$ one cannot have $k\ge 4$, and for $n\ge 6$ one cannot have $k=3$.
\end{lemma}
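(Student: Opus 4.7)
The plan is to derive a contradiction by inserting the hypothesis $d(M)=n^2-3$ into the universal upper bound (\ref{estim4}) established in Section \ref{prelim}. Since $M$ is biholomorphic to $S(\Omega,H)$, we have $d(S(\Omega,H))=n^2-3$, and (\ref{estim4}) then translates into the necessary inequality
$$
f_n(k):=\frac{3k^2}{2}-\left(2n+\frac{5}{2}\right)k+4n+4\ge 0.
$$

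Observe that $f_n(k)$ is a quadratic in $k$ with positive leading coefficient; consequently its maximum over any closed interval in $k$ is attained at an endpoint. For the first assertion I would restrict attention to the admissible range $k\in[4,n]$ with $n\ge 5$. A direct evaluation gives $f_n(4)=18-4n$, which is at most $-2$ for $n\ge 5$, and $f_n(n)=-n^2/2+3n/2+4$, which equals $-1$ at $n=5$ and is strictly decreasing in $n$ afterwards, hence negative for every $n\ge 5$. Because both endpoints of $[4,n]$ give negative values and $f_n$ is convex, the maximum of $f_n$ on the interval is negative, so $f_n(k)<0$ throughout $[4,n]$, contradicting the necessary inequality and ruling out $k\ge 4$.

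For the second assertion I would simply plug in $k=3$ to obtain $f_n(3)=10-2n$, which is at most $-2$ whenever $n\ge 6$, again violating the required inequality.

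The argument is essentially a short computation once the bound (\ref{estim4}) is in hand; there is no substantive obstacle. The only care needed is to evaluate $f_n$ at \emph{both} endpoints of the admissible range for $k$ in the first case, since the quadratic $f_n(k)$ opens upward and a single endpoint value would not rule out positive values in the interior.
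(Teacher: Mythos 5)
Your proposal is correct and follows essentially the same route as the paper: both reduce the claim, via estimate (\ref{estim4}), to showing that $\frac{3k^2}{2}-\left(2n+\frac{5}{2}\right)k+4n+4<0$ in the stated ranges. The only difference is that you verify the negativity of this upward-opening quadratic by evaluating at the endpoints $k=4$ and $k=n$ (and $k=3$) and invoking convexity, whereas the paper computes the discriminant and locates the roots; your endpoint computations $f_n(4)=18-4n$, $f_n(n)=-n^2/2+3n/2+4$, $f_n(3)=10-2n$ all check out.
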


\begin{proof} To establish the lemma, we will show that for $n\ge 5$, $k\ge 4$, as well as for $n\ge 6$, $k=3$, the right-hand side of inequality (\ref{estim4}) is strictly less than $n^2-3$, i.e., that for such $n,k$ the following holds:
$$
\frac{3k^2}{2}-\left(2n+\frac{5}{2}\right)k+4n+4<0.
$$
In order to see this, let us study the quadratic function
$$
\varphi(t):=\frac{3t^2}{2}-\left(2n+\frac{5}{2}\right)t+4n+4.
$$
Its discriminant is
$$
{\mathcal D}:=4n^2-14n-\frac{71}{4},
$$
which is easily seen to be positive for $n\ge 5$. Then the zeroes of $\varphi$ are
$$
\begin{array}{l}
\displaystyle t_1:=\frac{2n+\frac{5}{2}-\sqrt{{\mathcal D}}}{3},\\
\vspace{-0.1cm}\\
\displaystyle t_2:=\frac{2n+\frac{5}{2}+\sqrt{{\mathcal D}}}{3}.
\end{array}
$$

To establish the lemma, it suffices to show that: (i) $t_2>n$ for $n\ge 5$, (ii) $t_1<4$ for $n\ge 5$, (iii) $t_1<3$ for $n\ge 6$. Indeed, the inequality $t_2>n$ means that  
$$
n-\frac{5}{2}< \sqrt{{\mathcal D}},
$$
or, equivalently, that  
$$
n^2-3n-8>0,
$$
which is straightforward to verify for $n\ge 5$. Next, the inequality $t_1<4$ means that
$$
2n-\frac{19}{2}< \sqrt{{\mathcal D}},
$$
or, equivalently, that
$$
n>\frac{9}{2},
$$
which clearly holds if $n\ge 5$.  Finally, the inequality $t_1<3$ means that
$$
2n-\frac{13}{2}< \sqrt{{\mathcal D}},
$$
or, equivalently, that
$$
n>5,
$$
which completes the proof.\end{proof}

By Lemma \ref{n5k3}, in order to establish the theorem, we need to consider the following four cases: (1) $k=2$, $n\ge 4$, (2) $k=3$, $n=4$, (3) $k=3$, $n=5$, (4) $k=4$, $n=4$. Our arguments in Cases (1), (2) and (4) will be similar to those in the corresponding situations considered in \cite{I6}, whereas Case (3) is new. 
\vspace{-0.1cm}\\

{\bf Case (1).} Suppose that $k=2$, $n\ge 4$. Here $H=(H_1,H_2)$ is a pair of Hermitian forms on $\CC^{n-2}$. After a linear change of $z$-variables, we may assume that $H_1$ is positive-definite. In this situation, by applying a linear change of $w$-variables, we can simultaneously diagonalize $H_1$, $H_2$ as
$$
H_1(w,w)=||w||^2,\,\,\, H_2(w,w)=\sum_{j=1}^{n-2}\lambda_j|w_j|^2.
$$
If all the eigenvalues of $H_2$ are equal, $S(\Omega,H)$ is linearly equivalent either to
$$ 
D_1:=\left\{(z,w)\in\CC^2\times\CC^{n-2}:\Im z_1-||w||^2>0,\,\,\Im z_2>0\right\},
$$
or to
$$ 
D_2:=\left\{(z,w)\in\CC^2\times\CC^{n-2}:\Im z_1-||w||^2>0,\,\,\Im z_2-||w||^2>0\right\}.
$$ 
The domain $D_1$ is biholomorphic to $B^{n-1}\times B^1$, hence $d(D_1)=n^2+2>n^2-3$, which shows that $S(\Omega,H)$ cannot be equivalent to $D_1$. To deal with $D_2$, let us compute the group $G(\Omega_1,(||w||^2,||w||^2))$. It is straightforward to see that
$$
G(\Omega_1,(||w||^2,||w||^2))=\left\{\left(\begin{array}{cc}
\rho & 0\\
0 & \rho
\end{array}
\right),\,\,
\left(\begin{array}{cc}
0 & \displaystyle\eta\\
\displaystyle\eta & 0
\end{array}
\right)\,\,\hbox{with}\,\,\rho,\,\eta>0\right\},
$$ 
and it follows that the action of $G(\Omega_1,(||w||^2,||w||^2))$ is not transitive on $\Omega_1$. This proves that $S(\Omega,H)$ cannot be equivalent to $D_2$ either. Therefore, $H_2$ has at least one  pair of distinct eigenvalues.

Next, as $\dim{\mathfrak g}(\Omega)=2$, inequality (\ref{estim2}) yields
\begin{equation}
s\ge n^2-4n-1.\label{estim5}
\end{equation} 
On the other hand, by (\ref{ests}), we have
$$
s\le n^2-4n+4.
$$
More precisely, $s$ is calculated as
\begin{equation}
s=n^2-4n+4-2m,\label{estim7}
\end{equation}
where $m\ge 1$ is the number of pairs of distinct eigenvalues of $H_2$. Indeed, if
$$
B=\left(B_{ij}\right),\,\,\, B_{ij}=-\overline{B_{ji}},\,\,\, i,j=1,\dots,n-2,
$$
is skew-symmetric with respect to $H_1$, the condition of skew-symmetricity with respect to $H_2$ is written as
$$
B_{ij}\lambda_i=-\overline{B}_{ji}\lambda_j,\,\,\, i,j=1,\dots,n-2,
$$
which leads to $B_{ij}=0$ if $\lambda_i\ne\lambda_j$. 

By (\ref{estim5}), (\ref{estim7}) it follows that $1\le m\le 2$, thus we have either $n=4$ and $\lambda_1\ne\lambda_2$ (here $m=1$, $s=2$), or $n=5$ and, upon permutation of $w$-variables, $\lambda_1\ne\lambda_2=\lambda_3$ (here $m=2$, $s=5$). We will now consider these two situations separately.
\vspace{0.1cm}

{\bf Case (1a).} Suppose that $n=4$, $\lambda_1\ne\lambda_2$. Here, after a linear change of variables the domain $S(\Omega,H)$ takes the form
$$
\begin{array}{ll}
D_3:=\left\{(z,w)\in\CC^2\times\CC^2:\Im z_1-(\alpha|w_1|^2+\beta|w_2|^2)>0,\right.\\
\vspace{-0.3cm}\\
\hspace{5cm}\left.\Im z_2-(\gamma|w_1|^2+\delta|w_2|^2)>0\right\},
\end{array}\label{domaind3}
$$
where $\alpha,\beta,\gamma,\delta\ge 0$ and
$$
\det\left(\begin{array}{ll}
\alpha & \beta\\
\gamma & \delta
\end{array}
\right)\ne 0.
$$
We may also assume that $\alpha>0$. If $\beta=\gamma=0$, the domain $D_3$ is biholomorphic to $B^2\times B^2$. Since $d(B^2\times B^2)=16>13=n^2-3$, we in fact have $\beta+\gamma>0$. Using Theorem \ref{descrg1/2}, we showed in \cite[Lemma 3.2]{I6} that in this case for ${\mathfrak g}={\mathfrak g}(D_3)$ one has ${\mathfrak g}_{1/2}=0$. By estimate (\ref{estim 8}) and the second inequality in (\ref{estimm}), we then see
\begin{equation}
d(D_3)\le 12<13=n^2-3\label{estimmmmm1}
\end{equation}
(recall that $s=2$). This proves that $S(\Omega,H)$ cannot in fact be equivalent to $D_3$, so Case (1a) contributes nothing to the classification of homogeneous Kobayashi-hyperbolic $n$-dimensional manifolds with automorphism group dimension $n^2-3$.

\begin{remark}\label{rem1}
As explained in \cite[Remark 3.3]{I6}, estimate (\ref{estimmmmm1}) can be improved to $d(D_3)\le 10$ by showing that for $\beta+\gamma>0$ the component ${\mathfrak g}_1$ of the algebra ${\mathfrak g}={\mathfrak g}(D_3)$ is also zero. This fact is obtained by using Theorem \ref{descrg1}.
\end{remark}
\vspace{0.1cm}

{\bf Case (1b).} Suppose that $n=5$ and $\lambda_1\ne\lambda_2=\lambda_3$. Here, after a linear change of variables the domain $S(\Omega,H)$ takes the form
$$
\begin{array}{ll}
D_4:=\left\{(z,w)\in\CC^2\times\CC^3:\Im z_1-(\alpha|w_1|^2+\beta|w_2|^2+\beta|w_3|^2)>0,\right.\\
\vspace{-0.3cm}\\
\hspace{4cm}\left.\Im z_2-(\gamma|w_1|^2+\delta|w_2|^2+\delta|w_3|^2)>0\right\},
\end{array}\label{domaind4}
$$
where $\alpha,\beta,\gamma,\delta\ge 0$ and
$$
\det\left(\begin{array}{ll}
\alpha & \beta\\
\gamma & \delta
\end{array}
\right)\ne 0.
$$
As before, we may also assume that $\alpha>0$. Then, if $\beta=\gamma=0$, the domain $D_4$ is biholomorphic to $B^3\times B^2$. Since $d(B^3\times B^2)=23>22=n^2-3$, we have $\beta+\gamma>0$. Using Theorem \ref{descrg1/2}, we showed in \cite[Lemma 3.4]{I6} that in this case for ${\mathfrak g}={\mathfrak g}(D_4)$ one has ${\mathfrak g}_{1/2}=0$. By (\ref{estim 8}) and the second inequality in (\ref{estimm}), we then estimate
\begin{equation}
d(D_4)\le 17<22=n^2-3\label{estimmmmm2}
\end{equation}
(recall that here $s=5$). This proves that $S(\Omega,H)$ cannot in fact be equivalent to $D_4$, so Case (1b) contributes nothing to the classification of homogeneous Kobayashi-hyperbolic $n$-dimensional manifolds with automorphism group dimension $n^2-3$ either.

\begin{remark}\label{rem2}
As explained in \cite[Remark 3.5]{I6}, estimate (\ref{estimmmmm2}) can be improved to $d(D_4)\le 15$ by showing that for $\beta+\gamma>0$ the component ${\mathfrak g}_1$ of the algebra ${\mathfrak g}={\mathfrak g}(D_4)$ is also zero. This fact is obtained by utilizing Theorem \ref{descrg1}.
\end{remark}
\vspace{0.1cm}

{\bf Case (2).} Suppose that $k=3$, $n=4$. Here $S(\Omega,H)$ is linearly equivalent either to
\begin{equation}
D_5:=\left\{(z,w)\in\times\CC^3\times\CC:\Im z-v|w|^2\in\Omega_2\right\},\label{domaind5}
\end{equation}
where $v=(v_1,v_2,v_3)$ is a non-zero vector in $\RR^3$ with non-negative entries, or to
\begin{equation}
D_6:=\left\{(z,w)\in\times\CC^3\times\CC:\Im z-v|w|^2\in\Omega_3\right\},\label{domaind6}
\end{equation}
where $v=(v_1,v_2,v_3)$ is a vector in $\RR^3$ satisfying $v_1^2\ge v_2^2+v_3^2$, $v_1>0$. We will consider these two cases separately.
\vspace{0.1cm}

{\bf Case (2a).} Assume that $S(\Omega,H)$ is equivalent to the domain $D_5$ defined in (\ref{domaind5}). If only one entry of $v$ is non-zero, $D_5$ is biholomorphic to $B^2\times B^1\times B^1$. Since $d(B^2\times B^1\times B^1)=14>13=n^2-3$, we see that in fact at least two entries of $v$ are non-zero. 

Consider the identity component $G(\Omega_2,v|w|^2)^{\circ}$ of the group $G(\Omega_2,v|w|^2)$. As $G(\Omega_2,v|w|^2)^{\circ}$ lies in the identity component $G(\Omega_2)^{\circ}$ of $G(\Omega_2)$, every element of $G(\Omega_2,v|w|^2)^{\circ}$ is a diagonal matrix
\begin{equation}
\left(\begin{array}{lll}
\lambda_1 & 0 & 0\\
0 & \lambda_2 & 0\\
0 & 0 & \lambda_3
\end{array}
\right),\quad\lambda_j>0,\,\,j=1,2,3,\label{diagmatrrrcces}
\end{equation}
for which $v$ is an eigenvector. Therefore, if all entries of $v$ are non-zero, then $G(\Omega_2,v|w|^2)^{\circ}$ consists of scalar matrices, and if exactly two entries of $v$, say $v_i$ and $v_j$, are non-zero, then $G(\Omega_2,v|w|^2)^{\circ}$ consists of matrices of the form (\ref{diagmatrrrcces}) with $\lambda_i=\lambda_j$. In either situation, the action of $G(\Omega_2,v|w|^2)^{\circ}$ on $\Omega_2$ is not transitive. This shows that $S(\Omega,H)$ cannot in fact be equivalent to $D_5$, so Case (2a) contributes nothing to the classification of homogeneous Kobayashi-hyperbolic $n$-dimensional manifolds with automorphism group dimension $n^2-3$.
\vspace{0.1cm}

{\bf Case (2b).} Assume now that $S(\Omega,H)$ is equivalent to the domain $D_6$ defined in (\ref{domaind6}). Suppose first that $v_1^2>v_2^2+v_3^2$, i.e., that $v\in\Omega_3$. As the vector $v$ is an eigenvector of every element of $G(\Omega_3,v|w|^2)$, it then follows that $G(\Omega_3,v|w|^2)$ does not act transitively on $\Omega_3$. This shows that in fact we have $v_1=\sqrt{v_2^2+v_3^2}\ne 0$, i.e., $v\in\partial\Omega_3\setminus\{0\}$. Further, as the group $G(\Omega_3)^{\circ}=\RR_{+}\times\SO(1,2)^{\circ}$ acts transitively on $\partial\Omega_3\setminus\{0\}$, we suppose from now on that $v=(1,1,0)$.

\begin{lemma}\label{gomega3h}\it For the Hermitian form ${\mathcal H}(w,w'):=(\bar ww',\bar ww',0)$ we have
$$
\dim G(\Omega_3,{\mathcal H})=3.
$$
\end{lemma}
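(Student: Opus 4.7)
The plan is to reduce the defining condition of $G(\Omega_3,\mathcal{H})$ to a transparent geometric condition on the null vector $v_0 := (1,1,0)$. Since here $n-k = 1$, any matrix $B \in \GL_1(\CC)$ appearing in (\ref{assoc}) is simply a nonzero complex scalar $\beta$, and $\mathcal{H}(w,w') = \bar w w' \cdot v_0$. The relation (\ref{assoc}) therefore collapses to $A v_0 = |\beta|^2 v_0$; conversely, given any $A \in G(\Omega_3)$ with $A v_0 = \rho v_0$ for some $\rho > 0$, setting $\beta := \sqrt{\rho}$ produces a valid $B$. Hence
\[
G(\Omega_3,\mathcal{H}) = \{A \in G(\Omega_3) : A v_0 \in \RR_+ v_0\}.
\]

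Since we only need the dimension, I would pass to the identity component and exploit the decomposition $G(\Omega_3)^{\circ} = \RR_+ \cdot \SO(1,2)^{\circ}$ coming from $\mathfrak{g}(\Omega_3) = \mathfrak{c}(\mathfrak{gl}_3(\RR)) \oplus \mathfrak{o}_{1,2}$. Writing each $A \in G(\Omega_3)^{\circ}$ uniquely as $A = \lambda R$ with $\lambda > 0$ and $R \in \SO(1,2)^{\circ}$, the scalar factor $\lambda$ preserves the ray through $v_0$ for free, so the membership condition reduces to $R v_0 \in \RR_+ v_0$.

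The remaining computation is to show that the stabilizer of the null ray $\RR_+ v_0$ in $\SO(1,2)^{\circ}$ is $2$-dimensional. This is the standard fact that the stabilizer of a null direction in a Lorentz group is a parabolic subgroup. I would verify it by passing to adapted coordinates
\[
u_1 = (x_1 + x_2)/\sqrt{2},\qquad u_2 = (x_1 - x_2)/\sqrt{2},\qquad u_3 = x_3,
\]
in which the Lorentzian form $x_1^2 - x_2^2 - x_3^2$ reads $2 u_1 u_2 - u_3^2$ and $v_0$ becomes $(\sqrt{2},0,0)$. In these coordinates the stabilizer of $\RR_+ v_0$ is immediately identified as the two-parameter subgroup generated by the boost along the $v_0$-direction (which scales $u_1$ and compensates on $u_2$) and the single "null rotation" that fixes $v_0$ pointwise while shearing $u_2$ against $u_3$. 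Combining these observations gives $\dim G(\Omega_3,\mathcal{H}) = 1 + 2 = 3$.

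The only genuine obstacle is the parabolic-stabilizer calculation in $\SO(1,2)^{\circ}$, and this is a routine matrix check once the adapted coordinates above are in place; the rest of the argument is bookkeeping that follows directly from the fact that $n - k = 1$ forces the $B$-component of any affine automorphism to be a single complex scalar, so (\ref{assoc}) reduces to a statement about the eigenvector $v_0$ alone.
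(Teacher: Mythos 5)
Your argument is correct, and its first half coincides with the paper's: in both cases the key reduction is that, because $n-k=1$ forces $B$ to be a complex scalar, membership in $G(\Omega_3,\mathcal H)$ is exactly the condition that $v_0=(1,1,0)$ be an eigenvector (with positive eigenvalue at the group level, or with an arbitrary real eigenvalue at the Lie-algebra level). Where you diverge is in the dimension count. The paper works infinitesimally: it writes out the explicit matrix form of ${\mathfrak g}(\Omega_3)={\mathfrak c}({\mathfrak{gl}}_3(\RR))\oplus{\mathfrak o}_{1,2}$ as matrices with parameters $\lambda,p,q,r$ and observes that the eigenvector condition amounts to the single linear equation $q=r$, so the subalgebra is $3$-dimensional --- a one-line computation with no structure theory. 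You instead stay at the group level, split off the $\RR_{+}$ factor of $G(\Omega_3)^{\circ}$, and identify the remaining condition as the stabilizer of a null ray in $\SO(1,2)^{\circ}$, i.e.\ a $2$-dimensional parabolic subgroup, verified in light-cone coordinates via the boost and the null rotation. Both are complete; the paper's route is shorter and needs only the explicit description of ${\mathfrak g}(\Omega_3)$ already quoted in Section 2, while yours is more conceptual and makes transparent \emph{why} the codimension in $G(\Omega_3)$ is $1$ (it equals the dimension of the orbit of the null ray, consistent with the transitivity of $G(\Omega_3)^{\circ}$ on $\partial\Omega_3\setminus\{0\}$ used just before the lemma). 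One small point worth making explicit in your write-up: the eigenvalue $\rho$ is automatically positive for any $A\in G(\Omega_3)$ admitting $v_0$ as an eigenvector, since $A$ preserves $\overline{\Omega_3}$, which contains $v_0$ but not $-v_0$; as you phrase the characterization with $\RR_{+}v_0$ from the start this does not affect the proof, but it explains why no eigenvectors are lost.
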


\begin{proof} We will compute the dimension of the Lie algebra of $G(\Omega_3,{\mathcal H})$, which we momentarily denote by ${\mathfrak h}$. Clearly, ${\mathfrak h}$ consists of all elements of ${\mathfrak g}(\Omega_3)$ having $(1,1,0)$ as an eigenvector. Recall now that 
$$
{\mathfrak g}(\Omega_3)={\mathfrak c}({\mathfrak{gl}}_3(\RR))\oplus{\mathfrak o}_{1,2}=\left\{
\left(\begin{array}{lll}
\lambda & p & q\\
p & \lambda & r\\
q & -r & \lambda
\end{array}
\right),\,\,\,\lambda,p,q,r\in\RR\right\}.
$$
It then follows that
$$
{\mathfrak h}=\left\{\left(\begin{array}{lll}
\lambda & p & q\\
p & \lambda & q\\
q & -q & \lambda
\end{array}
\right),\,\,\,\lambda,p,q\in\RR\right\}.
$$
In particular, $\dim {\mathfrak h}=3$ as required.\end{proof}

By Lemma \ref{gomega3h} we see that for ${\mathfrak g}={\mathfrak g}(D_6)$ one has $\dim{\mathfrak g}_0=4$ (recall that $s=1$). Furthermore, using Theorem \ref{descrg1/2}, we showed in \cite[Lemma 3.6]{I6} that ${\mathfrak g}_{1/2}=0$. Combining these two facts with the second inequality in (\ref{estimm}), we obtain 
\begin{equation}
d(D_6)=\dim{\mathfrak g}_{-1}+\dim{\mathfrak g}_{-1/2}+\dim{\mathfrak g}_0+\dim{\mathfrak g}_1\le 12<13=n^2-3.\label{estimmmmm3}
\end{equation}
This proves that $S(\Omega,H)$ cannot in fact be equivalent to $D_6$, so Case (2b) contributes nothing to the classification of homogeneous Kobayashi-hyperbolic $n$-dimen\-sional manifolds with automorphism group dimension $n^2-3$.

\begin{remark}\label{remg1d6} As explained in \cite[Remark 3.7]{I6}, one can utilize Theorem \ref{descrg1} to show that $\dim{\mathfrak g}_1=1$. Therefore, we in fact have $d(D_6)=10$, which improves\linebreak estimate (\ref{estimmmmm3}).  
\end{remark}
\vspace{0.1cm}

{\bf Case (3).} Suppose that $k=3$, $n=5$. Here $S(\Omega,H)$ is linearly equivalent either to
\begin{equation}
D_7:=\left\{(z,w)\in\times\CC^3\times\CC^2:\Im z-{\mathcal H}(w,w)\in\Omega_2\right\},\label{domaind7}
\end{equation}
where ${\mathcal H}$ is an $\Omega_2$-Hermitian form, or to
\begin{equation}
D_8:=\left\{(z,w)\in\times\CC^3\times\CC^2:\Im z-{\mathcal H}(w,w)\in\Omega_3\right\},\label{domaind8}
\end{equation}
where ${\mathcal H}$ is an $\Omega_3$-Hermitian form. We will consider these two cases separately.
\vspace{0.1cm}

{\bf Case (3a).} First, we will show that $S(\Omega,H)$ cannot be equivalent to the domain $D_7$ defined in (\ref{domaind7}). Indeed, recalling that $\dim{\mathfrak g}(\Omega_3)=3$, by estimate (\ref{estim3}) we have
$$
d(D_7)\le 21<22=n^2-3,
$$
which proves our claim. 
\vspace{0.1cm}

{\bf Case (3b).} Assume that $S(\Omega,H)$ is equivalent to the domain $D_8$ defined in (\ref{domaind8}). By (\ref{ests}) one has $s\le 4$. If $s<4$, by inequality (\ref{estim2}) we see
$$
d(D_8)\le 21<22=n^2-3.
$$
Therefore, we in fact have $s=4$.

Let ${\mathcal H}=({\mathcal H}_1,{\mathcal H}_2,{\mathcal H}_3)$ and ${\mathbf H}$ be a positive-definite linear combination of ${\mathcal H}_1$, ${\mathcal H}_2$, ${\mathcal H}_3$. By applying a linear change of $w$-variables, we can diagonalize ${\mathbf H}$ as\linebreak ${\mathbf H}(w,w)=||w||^2$. Since $s=4$, in these coordinates the subspace ${\mathcal L}$ coincides with the algebra ${\mathfrak u}_2$ of skew-Hermitian matrices of size $2\times 2$ (recall from Section \ref{prelim} that $s=\dim{\mathcal L}$). 

\begin{lemma}\label{skewherm} \it
Let ${\mathtt H}(w,w')$ be a Hermitian form on $\CC^m$ with values in $\CC$ such that
\begin{equation}
{\mathtt H}(Aw,w')+{\mathtt H}(w,Aw')=0\label{commut}
\end{equation}
for all $w,w'\in\CC^m$ and all $A\in {\mathfrak u}_m$. Then ${\mathtt H}(w,w)=\lambda ||w||^2$ for some $\lambda\in\RR$ and all $w\in\CC^m$.
\end{lemma}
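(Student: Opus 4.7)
The plan is to recognize condition (\ref{commut}) as the infinitesimal form of unitary invariance, and then exploit the fact that the standard Hermitian form is essentially the only $U(m)$-invariant Hermitian form (up to a real scalar).

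Concretely, first I would represent ${\mathtt H}$ by a Hermitian matrix $M\in{\mathfrak{gl}}_m(\CC)$ via ${\mathtt H}(w,w')=\bar w^{T}Mw'$ (so that Hermiticity of ${\mathtt H}$ translates into $M^{*}=M$). Then condition (\ref{commut}) becomes $A^{*}M+MA=0$ for every $A\in{\mathfrak u}_m$. Since $A^{*}=-A$ on ${\mathfrak u}_m$, this reduces to the bracket relation $[M,A]=0$ for all $A\in{\mathfrak u}_m$. In other words, $M$ must lie in the centralizer of ${\mathfrak u}_m$ inside ${\mathfrak{gl}}_m(\CC)$.

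The next step is to upgrade this to a centralizer condition over the complex numbers. Since every $X\in{\mathfrak{gl}}_m(\CC)$ decomposes as $X=\frac{1}{2}(X-X^{*})+i\cdot\frac{1}{2i}(X+X^{*})$ with both summands in ${\mathfrak u}_m$ up to multiplication by $i$, one has ${\mathfrak{gl}}_m(\CC)={\mathfrak u}_m\oplus i{\mathfrak u}_m$ as a real vector space, so ${\mathfrak u}_m$ is a real form of ${\mathfrak{gl}}_m(\CC)$. By $\CC$-linearly extending the centralizer condition, $M$ commutes with all of ${\mathfrak{gl}}_m(\CC)$; Schur's lemma (applied to the defining representation on $\CC^m$, which is obviously irreducible under ${\mathfrak{gl}}_m(\CC)$) then forces $M=\lambda I$ for some $\lambda\in\CC$. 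Hermiticity of $M$ pins down $\lambda\in\RR$, and substituting back gives ${\mathtt H}(w,w)=\lambda\|w\|^2$, as desired.

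There is essentially no serious obstacle here: the argument is a single application of Schur's lemma once the right matrix reformulation is adopted. If one preferred to avoid Schur, one could instead plug in the elementary skew-Hermitian matrices $i E_{jj}$, $E_{jk}-E_{kj}$, and $i(E_{jk}+E_{kj})$ into (\ref{commut}) for varying $j\ne k$, and read off directly that all off-diagonal entries of $M$ vanish and that the diagonal entries are all equal; the calculation is routine and yields the same conclusion.
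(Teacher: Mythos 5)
Your proof is correct, but your main argument takes a genuinely different route from the paper's. Both start identically: you represent ${\mathtt H}$ by a Hermitian matrix and observe that (\ref{commut}) is equivalent to that matrix commuting with every element of ${\mathfrak u}_m$. From there the paper argues by direct computation: for each pair of indices $i_0<j_0$ it plugs in the skew-Hermitian matrix with $a$ in position $(i_0,j_0)$ and $-\bar a$ in position $(j_0,i_0)$, and reads off from the commutation relation that the off-diagonal entries of the matrix of ${\mathtt H}$ vanish and the diagonal entries coincide --- this is exactly the ``elementary'' alternative you sketch in your final sentence. Your primary route instead uses the real decomposition ${\mathfrak{gl}}_m(\CC)={\mathfrak u}_m\oplus i\,{\mathfrak u}_m$ to upgrade the centralizer condition from ${\mathfrak u}_m$ to all of ${\mathfrak{gl}}_m(\CC)$, and then invokes Schur's lemma (equivalently, the fact that the center of the full matrix algebra consists of the scalar matrices) to get $M=\lambda I$, with Hermiticity forcing $\lambda\in\RR$. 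This is slicker and makes the structural reason for the lemma transparent --- the defining representation of $U(m)$ is irreducible, so the space of invariant Hermitian forms is one-dimensional over $\RR$ --- at the modest cost of citing representation theory; the paper's computation is longer but entirely self-contained. Either argument is a complete proof.
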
 

\begin{proof} We may assume that $m>1$. Let ${\mathrm H}$ denote the matrix of the Hermitian form ${\mathtt H}$. Then for any $A\in {\mathfrak u}_m$ condition (\ref{commut}) is satisfied for all $w,w'\in\CC^m$ if and only if $A$ and ${\mathrm H}$ commute. Hence ${\mathrm H}$ commutes with every element of ${\mathfrak u}_m$. 

Fix $1\le i_0<j_0\le m$ and let $A=(A_{ij})$ be the matrix with entries
$$ 
A_{ij}=\left\{\begin{array}{rl}
a & \hbox{if $(i,j)=(i_0,j_0)$},\\
\vspace{-0.3cm}\\
-\bar a & \hbox{if $(i,j)=(j_0,i_0)$},\\
\vspace{-0.3cm}\\
0 & \hbox{otherwise}
\end{array}
\right.
$$
for $a\in\CC$. Then we have
$$
\begin{array}{l}
(A{\mathrm H})_{i_0i_0}= a{\mathrm H}_{j_0i_0},\,\, (A{\mathrm H})_{j_0j_0}= -\bar a{\mathrm H}_{i_0j_0}, \,\, (A{\mathrm H})_{i_0j_0}= a{\mathrm H}_{j_0j_0},\,\, (A{\mathrm H})_{j_0i_0}=-\bar a{\mathrm H}_{i_0i_0}, \\
\vspace{-0.3cm}\\
({\mathrm H}A)_{i_0i_0}=-\bar a {\mathrm H}_{i_0 j_0}, \,\, ({\mathrm H}A)_{j_0j_0}=a{\mathrm H}_{j_0i_0}, \,\, ({\mathrm H}A)_{i_0j_0}= a{\mathrm H}_{i_0i_0}, \,\, ({\mathrm H}A)_{j_0i_0}=-\bar a{\mathrm H}_{j_0j_0}. 
\end{array}
$$
Now, the fact that $A$ and ${\mathrm H}$ commute for every $a\in\CC$ yields
$$
{\mathrm H}_{i_0i_0}={\mathrm H}_{j_0j_0},\,\,{\mathrm H}_{i_0j_0}=0,
$$ 
which completes the proof.\end{proof}

By Lemma \ref{skewherm}, the $\CC$-valued Hermitian forms ${\mathcal H}_1$, ${\mathcal H}_2$, ${\mathcal H}_3$ are proportional to ${\mathbf H}$. This shows that ${\mathcal H}(w,w)=v||w||^2$, where $v=(v_1,v_2,v_3)$ is a vector in $\RR^3$ satisfying $v_1^2\ge v_2^2+v_3^2$, $v_1>0$. 

We will now proceed as in Case (2b). Indeed, observe first that $v$ is an eigenvector of every element of $G(\Omega_3,v||w||^2)$. Then, if $v_1^2>v_2^2+v_3^2$, it follows that $G(\Omega_3,v||w||^2)$ does not act transitively on $\Omega_3$. Therefore $v_1=\sqrt{v_2^2+v_3^2}\ne 0$, i.e., $v\in\partial\Omega_3\setminus\{0\}$. As the group $G(\Omega_3)^{\circ}=\RR_{+}\times\SO(1,2)^{\circ}$ acts transitively on $\partial\Omega_3\setminus\{0\}$, we can suppose that $v=(1,1,0)$, so ${\mathcal H}(w,w)=(||w||^2,||w||^2,0)$.

Next, as in Lemma \ref{gomega3h}, we obtain
$$
\dim G(\Omega_3,{\mathcal H})=3.
$$
We then see that for ${\mathfrak g}={\mathfrak g}(D_8)$ one has $\dim{\mathfrak g}_0=7$ (recall that $s=4$). Combining this fact with inequalities (\ref{estimm}), we estimate 
$$
d(D_8)=\dim{\mathfrak g}_{-1}+\dim{\mathfrak g}_{-1/2}+\dim{\mathfrak g}_0+\dim{\mathfrak g}_{1/2}+\dim{\mathfrak g}_1\le 21<22=n^2-3.
$$
This proves that $S(\Omega,H)$ cannot in fact be equivalent to $D_8$, so Case (3b) contributes nothing to the classification of homogeneous Kobayashi-hyperbolic $n$-dimen\-sional manifolds with automorphism group dimension $n^2-3$.
\vspace{0.1cm}

{\bf Case (4).} Suppose that $k=4$, $n=4$. In this case, after a linear change of variables $S(\Omega,H)$ turns into one of the domains
$$
\begin{array}{l}
\left\{z\in\CC^4: \Im z\in\Omega_4\right\},\\
\vspace{-0.1cm}\\
\left\{z\in\CC^4: \Im z\in\Omega_5\right\},\\
\vspace{-0.1cm}\\
\left\{z\in\CC^4: \Im z\in\Omega_6\right\}
\end{array}
$$
and therefore is biholomorphic either to $B^1\times B^1\times B^1\times B^1$, or to $B^1\times T_3$, or to $T_4$, where $T_3$ and $T_4$ are the tube domains defined in (\ref{domaint3}), (\ref{domaint4}). The dimensions of the automorphism groups of these domains are 12, 13, 15, respectively. Noting that $n^2-3=13$, we see that $S(\Omega,H)$ is biholomorphic to the product $B^1\times T_3$, so Case (4) only contributes $B^1\times T_3$ to the classification of homogeneous Kobayashi-hyperbolic $n$-dimensional manifolds with automorphism group dimension $n^2-3$. 

The proof of Theorem \ref{main} is now complete.\qed


\begin{thebibliography}{ABCDE}

\bibitem[C]{C} Cartan, \'E., Sur les domaines born{\'e}s homog{\`e}nes de l'espace de $n$ variables complexes, {\it Abh. Math. Sem. Univ. Hamburg} \textbf{11} (1935), 116--162.

\bibitem[D]{D} Dorfmeister, J., Homogeneous Siegel domains, {\it Nagoya Math. J.} \textbf{86} (1982), 39--83.

%\bibitem[GIL]{GIL} Geatti, L., Iannuzzi, A. and Loeb, J.-J., Hyperbolic manifolds whose envelopes of holomorphy are not hyperbolic, https:/\hspace{-0.05cm}/arxiv.org/abs/math/0612182.

%\bibitem[HK]{HK} Hahn, K. T.(1-PAS) and Kim, K. T., Hyperbolicity of a complex manifold and other equivalent properties, {\it Proc. Amer. Math. Soc.} \textbf{91} (1984), 49--53.

\bibitem[I1]{I1} Isaev, A. V., Hyperbolic manifolds of dimension $n$
with automorphism group of dimension $n^2-1$, {\it J. Geom. Anal.} \textbf{15} (2005), 239--259.

\bibitem[I2]{I2} Isaev, A. V., Hyperbolic $n$-dimensional manifolds with automorphism group of dimension $n^2$, {\it Geom. Funct. Anal. {\rm (}GAFA{\rm )}} \textbf{17} (2007), 192--219.

\bibitem[I3]{I3} Isaev, A. V., {\it Lectures on the Automorphism Groups of Kobayashi-Hyperbolic Manifolds}, Lecture Notes in Mathematics, 1902, Springer, Berlin, 2007.

\bibitem[I4]{I4} Isaev, A. V., Hyperbolic 2-dimensional manifolds with 3-dimensional automorphism group, {\it Geom. Topol.} \textbf{12} (2008), 643--711.

\bibitem[I5]{I5} Isaev, A. V., Proper actions of high-dimensional groups on complex manifolds, {\it Bull. Math. Sci.} \textbf{5} (2015), 251--285.

\bibitem[I6]{I6} Isaev, A. V., Homogeneous Kobayashi-hyperbolic manifolds with high-dimensional group of holomorphic automorphisms, preprint, https://arxiv.org/abs/1709.03052.

\bibitem[IK]{IK} Isaev, A. V. and Krantz, S. G., On the automorphism groups of hyperbolic manifolds, {\it J. reine angew. Math.} \textbf{534} (2001), 187--194.

\bibitem[KT]{KT} Kaneyuki, S. and Tsuji, T., Classification of homogeneous bounded domains of lower dimension, {\it Nagoya Math. J.} \textbf{53} (1974), 1--46.

\bibitem[KMO]{KMO} Kaup, W., Matsushima, Y. and Ochiai, T., On the automorphisms and equivalences of generalized Siegel domains, {\it Amer. J. Math.} \textbf{92} (1970), 475--497.

\bibitem[K1]{K1} Kobayashi, S., {\it Hyperbolic Manifolds and Holomorphic Mappings}, Marcel Dekker, New York, 1970.

\bibitem[K2]{K2} Kobayashi, S., {\it Hyperbolic Complex Spaces}, Grundlehren der Mathematischen Wissenschaften, 318, Springer-Verlag, Berlin, 1998.

\bibitem[N]{N} Nakajima, K., Homogeneous hyperbolic manifolds and homogeneous Siegel domains, {\it J. Math. Kyoto Univ.} \textbf{25} (1985), 269--291.

\bibitem[P-S]{P-S} Pyatetskii-Shapiro, I. I., {\it Automorphic Functions and the Geometry of Classical Domains}, Gordon and Breach, New York, 1969.

\bibitem[R]{R} Rudin, W., {\it Function Theory in the Unit Ball of $\CC^n$}, Grundlehren der Mathematischen Wissenschaften, 241, Springer, New York-Berlin, 1980.

\bibitem[S]{S} Satake, I., {\it Algebraic Structures of Symmetric Domains}, $\hbox{Kan}\hat{\hbox{o}}$ Memorial Lectures, 4, Princeton University Press, 1980.

\bibitem[V]{V} Verma, K., A characterization of domains in $\CC^2$ with noncompact automorphism group, {\it Math. Ann.} \textbf{344} (2009), 645--701.

\bibitem[VGP-S]{VGP-S} Vinberg, E. B., Gindikin, S. G. and Pjatecki\u\i-\v Sapiro, I. I., Classification and canonical realization of complex bounded homogeneous domains, {\it Trans. Mosc. Math. Soc.}  \textbf{12} (1963), 404--437.


\end{thebibliography}
\end{document}